\newtheorem{theorem}{Theorem}[section]
\newtheorem{proposition}[theorem]{Proposition}
\newtheorem{lemma}[theorem]{Lemma}
\newtheorem{corollary}[theorem]{Corollary}
\theoremstyle{remark}
\newtheorem{remark}[theorem]{Remark}
\newcommand{\res}{\mathrm{Res } }
\def\cal{\mathcal}
\author[M. B\'ona]{Mikl\'os  B\'ona and Istv\'an Mez\H{o}}
\title[Vertices of a given rank]{Limiting probabilities for vertices of a given rank in rooted trees}
\address{\rm M. B\'ona, Department of Mathematics, 
University of Florida,
358 Little Hall, 
PO Box 118105, 
Gainesville, FL 32611--8105 (USA)\\
\rm I. Mez\H{o}, School of Mathematics and Statistics, 
Nanjing University of Information Science and Technology,
Nanjing, 210044, (P.R. China)
}
\date{\today}
\begin{document}

\begin{abstract}
We consider two varieties of labeled rooted trees, and the probability that a vertex chosen from all vertices of all trees
of a given size uniformly at random has a given rank. We prove that this probability converges to a limit as the tree size
goes to infinity. 
\end{abstract}

\maketitle

\section{Introduction}

Let $\cal T$ be a class of rooted labeled trees. If $v$ is a vertex of a  tree $T\in \cal T$, then let the \emph{ rank} of $v$ be the number of edges in the
shortest  path from $v$ to a leaf of $T$ that is a descendant of $v$. So leaves are of rank 0, neighbors of leaves are of rank 1, and so on. For a fixed $n$, consider all vertices of all trees in $\cal T$ that have $n$ vertices, 
and choose one vertex uniformly at random. Let $a_{n,k}$ be the probability that the chosen vertex is of rank $k$. It is then natural to ask whether the limiting probability
\[a_k = \lim_{n\rightarrow \infty} a_{n,k} \]
exists.

For one tree variety, {\em decreasing binary trees}, it has been shown \cite{protected}, \cite{janson} that these limits $a_k$ exist, and the values of $a_k$ were explicitly computed in  \cite{bona-pittel} for $k\leq 6$. Recursive trees
are discussed in \cite{holmgren}.  However, the methods that were successful for these trees are often unsuccessful for other tree varieties if $k>1$. This is because many of the relevant differential equations cannot be solved, or even, explicitly
stated, caused by the fact that many of the relevant functions lack an elementary antiderivatives. We will explain this phenomenon in Section \ref{nonplane12}.

This raises the intriguing question whether we can prove that $a_k$ exists  for some of these tree varieties, {\em even though we cannot explicitly compute its value}. In this paper we will answer that question in the affirmative for two
labeled tree
varieties, {\em non-plane 1-2 trees}, and {\em plane 1-2 trees}. For $k=0$ and $k=1$, we are able to compute the
exact values of $a_k$. 

\section{Non-plane 1-2 trees} \label{nonplane12}

Our first example is the class of {\em labeled non-plane 1-2 trees}. In such trees, every non-leaf vertex has at most two children, the vertices are bijectively labeled with the elements of $[n]=\{1,2,\cdots ,n\}$ so that the label of each vertex is less than that of its parent, and the set of children of any given vertex is unordered. See Figure \ref{fig:fivetrees} for the five non-plane 1-2 trees on vertex set $[4]$. In this section, when we write {\em tree}, we will always mean a labeled
non-plane 1-2 tree on vertex set $[n]$. 

\begin{figure}
 \begin{center}
  \includegraphics[width=70mm]{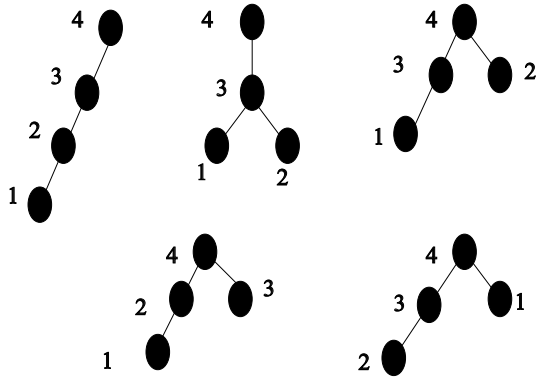}
  \caption{The five rooted non-plane 1-2 trees on vertex set $[4]$. }
  \label{fig:fivetrees}
 \end{center}
\end{figure}

It is well-known \cite{FL} that the number  of labeled non-plane 1-2 trees on vertex
set $[n]$ is the $n$th Euler number $E_n$, and that the identity
\begin{equation} 
\label{euler} E(z)=\sum_{n\geq 0} E_n \frac{z^n}{n!} = \sec z + \tan z
\end{equation}
holds, where we set $E_0=1$.

The first values of $E_n$ are as follows:

\begin{center}
\begin{tabular}{c|c|c|c|c|c|c|c|c|c|c|c|c}
$n$&0&1&2&3&4&5&6&7&8&9&10\\
$E_n$&1&1&1&2&5&16&61&272&1385&7936&50521
\end{tabular}
\end{center}

It follows from (\ref{euler}) that $E(z)$ has two singularities of smallest modulus, at $z=\pi/2$ and at $z=-\pi/2$. Therefore, the exponential order of growth of the Euler numbers is $2/\pi$. In order to find the growth rate of the Euler numbers more precisely, note that  at both of  these singularities, we can find the residue of $E(z)$ by the following well-known formula.

\begin{proposition} \label{residues}
Let $H(z)=f(z)/g(z)$ be a function so that $f(z)$ and $g(z)$ are analytic
functions at $z_0$, and $f(z_0)\neq 0$, while $g(z)=0$ and $g'(z)\neq 0$. Then 
\[\res H(z) \mid_{z_0} =\frac{f(z_0)}{g'(z_0)}.\]
\end{proposition}

We can apply Proposition \ref{residues} to $E(z)$ if we note that
$E(z)=\frac{1+\sin z}{\cos z}$. Then  Proposition \ref{residues} implies
that $\res E(z)\Bigr|_{\pi/2}=\frac{2}{-1}=-2$, and $\res E(z) \mid_{-\pi/2}=\frac{0}{1}=0$. 

Now observe that
\begin{eqnarray} \frac{R}{z-a} & = & \frac{R}{-a} \cdot \frac{1}{1-\frac{z}{a}}
\\
& = &  \frac{R}{-a} \sum_{n\geq 0} \frac{z^n}{a^n}.
\end{eqnarray} 

Applying this observation  to $E(z)$ with $a=\pi/2$ and $R=-2$, we get that the dominant term of $E(z)$ is of the form $\frac{4}{\pi}\sum_{n\geq 0} z^n (2/\pi)^n$, so 
\begin{equation}
\label{eulerprecise}
\frac{E_n}{n!} \sim \frac{4}{\pi} \cdot  \left(\frac{2}{\pi} \right)^n .
\end{equation}

Now we proceed to determine $a_0$ and $a_1$. For these small values of $k$, we can explicitly determine $a_k$, 
but we will also see why the same approach fails for larger values of $k$. 

\subsection{Leaves}
Now let $A_{0,n}$ denote the total number of {\em leaves} in all non-plane 1-2 trees on vertex set $[n]$. Then $A_{0,0}=0$, $A_{0,1}=A_{0,2}=1$, while $A_{0,3}=3$, $A_{0,4}=9$, $A_{0,5}=35$, and $A_{0,6}=155$. 

\begin{theorem} Let $A_0(z)=\sum_{n\geq 0}A_{0,n} \frac{z^n}{n!}$. Then 
\begin{equation} \label{formofb} A_0(z)=\frac{z-1+\cos z}{1-\sin z}.
\end{equation}
\end{theorem}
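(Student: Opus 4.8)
The plan is to derive a first-order linear differential equation for $A_0(z)$ from the recursive structure of the trees and then to solve it in closed form. First I would use the standard decomposition of an increasing tree: a nonempty non-plane 1-2 tree consists of a root (carrying the largest label) together with an unordered set of at most two children, each the root of a smaller tree of the same kind. Introducing the bivariate exponential generating function $F(z,u)=\sum_T u^{\ell(T)} z^{|T|}/|T|!$, where the sum is over nonempty trees and $\ell(T)$ is the number of leaves of $T$, and marking the degree-zero vertices (the leaves) by $u$, this decomposition translates into
\[ \frac{\partial F}{\partial z}(z,u) = u + F(z,u) + \frac{F(z,u)^2}{2}, \qquad F(0,u)=0. \]
At $u=1$ the leaf mark disappears and $F(z,1)=E(z)-1$; one checks that $E(z)-1$ indeed solves the resulting scalar equation, in agreement with $E(z)=\sec z+\tan z$.

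Next I would extract $A_0$. Since each tree is weighted by $u^{\ell(T)}$, we have $A_0(z)=\frac{\partial F}{\partial u}(z,u)\big|_{u=1}$. Differentiating the displayed equation with respect to $u$ gives $\partial_z \partial_u F = 1 + \partial_u F + F\,\partial_u F$, and evaluating at $u=1$ (where $\partial_u F = A_0$ and $F=E-1$) yields the linear equation
\[ A_0'(z) = 1 + E(z)\,A_0(z), \qquad A_0(0)=0. \]

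It then remains to solve this equation, and the crucial point is that its integrating factor is elementary. Using $E(z)=\sec z+\tan z$ one computes $\int_0^z E(t)\,dt = \log\frac{1}{1-\sin z}$, so $\exp\big(-\int_0^z E\big)=1-\sin z$. Multiplying $A_0' - E\,A_0 = 1$ by $1-\sin z$ turns the left-hand side into $\big((1-\sin z)A_0\big)'$, and integrating with the constant fixed by $A_0(0)=0$ gives $(1-\sin z)A_0(z)=z-1+\cos z$, which is precisely (\ref{formofb}). I expect the main obstacle to be the bookkeeping in the first step: one has to be confident that marking only the degree-zero vertices, and using the factor $1/2$ for the unordered pair of children, produces exactly $\partial_z F = u + F + F^2/2$. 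Once the linear ODE is established, the collapse of the integrating factor to $1-\sin z$ makes the stated closed form drop out immediately, and it could even be verified directly by substituting (\ref{formofb}) into $A_0'=1+E A_0$.
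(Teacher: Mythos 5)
Your proof is correct, and it arrives at the paper's key equation $A_0'(z)=E(z)A_0(z)+1$ by a genuinely different, more systematic route. The paper works directly with pointed structures: it counts pairs $(v,T)$ with $v$ a marked leaf, removes the root, and applies the Product formula to see that what remains is a (marked tree, unmarked tree) pair counted by $A_0(z)E(z)$, with the single-vertex tree supplying the extra constant $1$. You instead introduce the bivariate series $F(z,u)$ marking \emph{all} leaves, write the autonomous equation $\partial_z F = u + F + F^2/2$ from the root decomposition (the $u$ term is the isolated-root case, and the $F^2/2$ for the unordered pair of subtrees is legitimate because the two subtrees always carry disjoint, nonempty label sets, hence are distinguishable), and then recover the linear ODE for $A_0=\partial_u F|_{u=1}$ by differentiating in $u$. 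These are morally the same pointing operation---$\partial_u|_{u=1}$ is exactly ``mark one leaf''---but your formalism confines all case analysis to the clean trichotomy leaf / one child / two children, so the bookkeeping you were worried about is automatic; this pays off for more refined statistics, where the paper's direct pointing argument requires explicit double-counting corrections (cf.\ the $-z^2/2!$ term in (\ref{formfora1})), while the paper's argument has the advantage of being elementary and self-contained. The solving step is the same in both proofs, and your computation of the integrating factor $\exp\left(-\int_0^z E(t)\,dt\right)=1-\sin z$ and of the constant of integration is correct.
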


\begin{proof}
Let $(v,T)$ be an ordered pair in which $T$ is a non-plane 1-2 tree on vertex set $[n]$ and $v$ is a leaf of $T$. Then $A_0(z)$ is the exponential generating function counting such pairs.  Let us first assume that $n>1$, and let us remove the root of $T$.  On the one hand, this leaves a structure that is counted by $A_0'(z)$. On the other hand, this leaves an ordered pair consisting of a non-plane 1-2 tree with a leaf marked, and a non-plane 1-2 tree. By the Product formula of exponential generating functions (see \cite[Theorem 8.21]{WalkThroughComb}), such ordered pairs are counted by the generating function $A_0(z)E(z)$. Finally, if $n=1$, then 
no such ordered pair is formed, while $A_0'(z)$ has constant term 1.  This leads to the linear differential equation
\begin{equation}
A_0'(z)=E(z)A_0(z) +1,\label{deA0}
\end{equation}
with the initial condition $A_0(0)=0$. Solving this equation we get formula (\ref{formofb}) for $A_0(z)$. 
\end{proof}

In order to determine the growth rate of the numbers $A_{0,n}$, we will need the following lemma, which is an enhanced version of Proposition \ref{residues}. 

\begin{lemma} \label{doubleres}
Let $H(z)=\frac{f(z)}{g(z)}$ be a function so that $f$ and $g$ are analytic functions at $z_0$, and $f(z_0)\neq 0$, while $g(z_0)=g'(z_0)=0$, and $g''(z)\neq 0$. Then 
\[H(z)=\frac{2f(z_0)}{g''(z_0)} \cdot \frac{1}{(z-z_0)^2} + \frac{h_{-1}}{z-z_0}
+h_0+\cdots.\]
\end{lemma}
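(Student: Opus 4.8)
The plan is to exhibit the full Laurent expansion of $H$ at $z_0$ by factoring the double zero out of the denominator, exactly as one does in the order-one case behind Proposition \ref{residues}. Since $g$ is analytic at $z_0$ with $g(z_0)=g'(z_0)=0$ and $g''(z_0)\neq 0$, Taylor's theorem gives
\[
g(z)=(z-z_0)^2\left(\frac{g''(z_0)}{2}+\frac{g'''(z_0)}{6}(z-z_0)+\cdots\right)=:(z-z_0)^2 G(z),
\]
where $G$ is analytic at $z_0$ and, crucially, $G(z_0)=g''(z_0)/2\neq 0$. Thus $z_0$ is a zero of $g$ of order exactly two.

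Next I would write
\[
H(z)=\frac{f(z)}{(z-z_0)^2 G(z)}=\frac{1}{(z-z_0)^2}\cdot\frac{f(z)}{G(z)}.
\]
Because $G(z_0)\neq 0$, the quotient $f/G$ is analytic in a neighborhood of $z_0$, so it has a Taylor expansion $f(z)/G(z)=c_0+c_1(z-z_0)+c_2(z-z_0)^2+\cdots$ with
\[
c_0=\frac{f(z_0)}{G(z_0)}=\frac{f(z_0)}{g''(z_0)/2}=\frac{2f(z_0)}{g''(z_0)}.
\]
Dividing this power series by $(z-z_0)^2$ produces precisely the claimed Laurent expansion, with leading coefficient $c_0=2f(z_0)/g''(z_0)$, residue $h_{-1}=c_1$, and holomorphic part given by $h_0=c_2$, and so on.

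There is no serious obstacle here: the only point that requires care is verifying $G(z_0)\neq 0$, which is exactly where the hypothesis $g''(z_0)\neq 0$ is used, and which guarantees both that the pole has order exactly two and that $f/G$ is genuinely analytic (so that term-by-term division by $(z-z_0)^2$ is legitimate). The argument is the natural order-two analogue of the residue computation behind Proposition \ref{residues}, with the factor of $2$ in the leading coefficient coming from the $g''(z_0)/2$ appearing as the first nonzero Taylor coefficient of $g$.
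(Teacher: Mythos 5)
Your proof is correct and follows essentially the same route as the paper: both factor the double zero as $g(z)=(z-z_0)^2\,G(z)$ and identify $G(z_0)=g''(z_0)/2$, the only cosmetic difference being that you read this off from the Taylor expansion of $g$ while the paper obtains it by differentiating the factorization twice. Your version is, if anything, slightly more explicit about why the full Laurent expansion exists, since you note that $f/G$ is analytic near $z_0$ before dividing by $(z-z_0)^2$.
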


\begin{proof}
The conditions directly imply that $g$ has a double root, and hence $H$ has a pole of order two, at $z_0$. In order to find the coefficient that belongs to that pole, let 
$g(z)=q(z)(z-z_0)^2$. Now differentiate both sides twice with respect to $z$, to get
\[g''(z)=q''(z)(z-z_0)^2 + 4q'(z)(z-z_0) + 2q(z).\]
Setting $z=z_0$, we get
\begin{equation}\label{fandg} g''(z_0)=2q(z_0).
\end{equation}
By our definitions, in a neighborhood of $z_0$, the function $H(z)$ behaves like
\[\frac{f(z)}{q(z)(z-z_0)^2},\] and our claim follows by (\ref{fandg}).
\end{proof}

\begin{theorem} \label{rank0}
The equality 
\[a_0=\lim_{n\rightarrow \infty} \frac{A_{0,n}}{nE_n} = 1-\frac{2}{\pi} \approx 0.3633802278\]
holds. In other words, for large $n$, the average non-plane 1-2 tree on vertex 
set $[n]$ has about $(n+1)\cdot ( 1-\frac{2}{\pi})$ leaves. 
\end{theorem}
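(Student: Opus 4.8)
The plan is to read off $A_{0,n}$ by singularity analysis of the explicit generating function (\ref{formofb}), and then to divide by the known asymptotics of $E_n$. Since $a_{0,n} = A_{0,n}/(nE_n)$ is a ratio of two coefficient sequences — the total number of leaves over all trees, divided by the total number $nE_n$ of vertices over all trees — and since (\ref{eulerprecise}) already supplies $E_n/n! \sim \frac{4}{\pi}(2/\pi)^n$, the entire problem reduces to pinning down the first-order asymptotics of $A_{0,n} = n!\,[z^n]A_0(z)$, including the correct polynomial-in-$n$ prefactor.

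First I would locate the dominant singularity of $A_0(z) = \frac{z-1+\cos z}{1-\sin z}$. The denominator $g(z) = 1 - \sin z$ vanishes exactly when $\sin z = 1$, that is at $z = \pi/2 + 2\pi k$; the solution of smallest modulus is $z_0 = \pi/2$, and the next-nearest singularities sit at $-3\pi/2$ and $5\pi/2$, both of strictly larger modulus, so $\pi/2$ is the unique dominant singularity. I would then check that Lemma \ref{doubleres} applies at $z_0 = \pi/2$: one has $g(\pi/2) = 0$, $g'(\pi/2) = -\cos(\pi/2) = 0$, and $g''(\pi/2) = \sin(\pi/2) = 1 \neq 0$, while the numerator satisfies $f(\pi/2) = \pi/2 - 1 + \cos(\pi/2) = \pi/2 - 1 \neq 0$. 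Hence $A_0$ has a double pole at $\pi/2$, and Lemma \ref{doubleres} gives its leading coefficient as $\frac{2f(\pi/2)}{g''(\pi/2)} = \pi - 2$.

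Next I would transfer this local expansion into coefficient asymptotics. Writing $A_0(z)$ minus its principal part at $\pi/2$, namely $\frac{\pi - 2}{(z-\pi/2)^2} + \frac{h_{-1}}{z - \pi/2}$, the difference is analytic throughout $|z| < 3\pi/2$, so its Taylor coefficients are $O(\rho^{-n})$ for any $\rho < 3\pi/2$ and are therefore exponentially smaller than $(2/\pi)^n$. Expanding the principal part exactly as in the geometric-series computation preceding (\ref{eulerprecise}) — but now for a double pole, using $\frac{1}{(z-a)^2} = \frac{1}{a^2}\sum_{n\geq 0}(n+1)(z/a)^n$ — shows that the simple-pole term contributes at one lower order in $n$, while the double-pole term gives $[z^n]\frac{\pi-2}{(z-\pi/2)^2} = (\pi-2)(n+1)(2/\pi)^{n+2}$. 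Consequently $A_{0,n}/n! \sim (\pi-2)(n+1)(2/\pi)^{n+2}$.

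Finally I would form the ratio. Dividing by $nE_n/n! \sim n\cdot\frac{4}{\pi}(2/\pi)^n$, the factors $(2/\pi)^n$ cancel and the remaining powers of $2/\pi$ collapse to $4/\pi^2$, leaving $a_{0,n}\sim \frac{(\pi-2)(n+1)}{\pi n}$, whose limit is $\frac{\pi-2}{\pi} = 1 - \frac{2}{\pi}$. The companion statement about the average number of leaves then follows immediately, since $A_{0,n}/E_n = n\,a_{0,n} \sim (n+1)(1-\tfrac2\pi)$. I expect the one genuinely delicate step to be the transfer in the third paragraph: because $A_0$ is meromorphic rather than rational, I must argue that subtracting the principal part really does leave a function analytic out to the next singularity, so that the tail is exponentially negligible, and I must retain the exact factor $(n+1)$ rather than just $n$ in order to recover the stated $(n+1)(1-\tfrac2\pi)$ count. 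Everything else is routine residue bookkeeping.
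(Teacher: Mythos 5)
Your proposal is correct and follows essentially the same route as the paper: locate the unique dominant singularity of $A_0(z)$ at $z=\pi/2$, apply Lemma \ref{doubleres} to extract the double-pole coefficient $\pi-2$, expand $\frac{1}{(z-\pi/2)^2}$ to get $A_{0,n}/n! \sim (n+1)(\pi-2)(2/\pi)^{n+2}$, and divide by (\ref{eulerprecise}). The only difference is that you spell out the transfer step (subtracting the principal part leaves a function analytic out to $3\pi/2$, so the remainder is exponentially negligible), which the paper leaves implicit; this is a welcome bit of added rigor but not a different argument.
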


\begin{proof}
Note that $A_{0}(z)$ has a unique singularity of smallest modulus, at $z=\pi/2$, hence the exponential growth rate of  its coefficients is $2/\pi$. Also note that at that point, the denominator of $A_0(z)$ has a double root. Therefore, Lemma \ref{doubleres} applies and we get that the coefficient of the $(z-\pi/2)^{-2}$ term in the Laurent series of $A_0(z)$ is 
\[2\cdot \frac{(\pi)/2 -1 +\cos (\pi/2)}{\sin(\pi/2)}=\pi-2.\]
Now observe that 
\begin{eqnarray} \frac{D}{(z-a)^2} & = & \frac{D}{a^2} \cdot
 \frac{1}{(1-\frac{z}{a})^2} \\
 & = & \frac{D}{a^2} \cdot \sum_{n\geq 0}(n+1)\frac{z^n}{a^n}.
\end{eqnarray}
Applying this to the dominant term of $A_0(z)$ with $D=\pi-2$ and $a=\pi/2$, we get that
\begin{equation}\label{leafprecise}
\frac{A_{0,n}}{n!} \sim (n+1) (\pi -2) \cdot \left(\frac{2}{\pi}\right)^{n+2}.
\end{equation}
The proof of our claim is now immediate by comparing formulas  (\ref{leafprecise}) and (\ref{eulerprecise}).
\end{proof}

It is worth pointing out that $\int A_0(z)= 1-(1-z)(\tan z+ \sec z)$, which implies the identity $A_{0,n}=(n+1)E_{n} -E_{n+1}$. See sequence A034428 in the Online Encyclopedia of Integer Sequences \cite{sloane}.

\subsection{Vertices of rank 1} \label{sectionrank1}
Let $A_{1,n}$ be the total number of vertices in all non-plane 1-2 trees on vertex set $[n]$ that are of rank 1. Note that 
such vertices are neighbors of a leaf. If $n>1$, then each leaf has exactly one rank-1 vertex as a neighbor, while some rank-1 vertices have not only one, but two leaves as neighbors.

The first few members of the sequence $A_{1,n}$ are $A_{1,0}=0$, $A_{1,1}=0$, $A_{1,2}=1$, $A_{1,3}=2$, $A_{1,4}=8$, $A_{1,5}=30$, and  $A_{1,6}=135$.

Let $A_1(z)=\sum_{n\geq 0}A_{1,n}\frac{z^n}{n!}$. Let $(v,T)$ be an ordered pair in which $T$ is a non-plane 1-2 tree on  vertex set $[n]$ and $v$ is a vertex of $T$ that is of rank 1.  If $v$ is not the root of $T$, then removing the root of $T$ decomposes $(v,T)$ into two structures, one of which is again a non-plane 1-2 tree with a vertex of rank 1 marked, and the other one of which is simply a non-plane 1-2 tree.  If $n>1$, and $v$ is the root of $T$, then removing $v$, we get two structures, one of which is a leaf, and the other one is a non-plane 1-2 tree. These two structures are distinguishable unless the original tree had three vertices, and its root had two children. That tree contributed $z^3/6$ to the generating function $A_1(z)$, but that contribution was counted twice. This leads to the linear differential equation
\begin{equation}\label{formfora1} A_1'(z)= A_1(z)\cdot E(z) + zE(z)-\frac{z^2}{2!},\end{equation}
with the initial condition $A_1(0)=0$. 

Solving this equation we get
\begin{equation}
\label{formofv}
A_1(z)= \frac{1}{6} \cdot
 \frac{12z\sin z +12\cos z -12 -3z^2\cos z -z^3}{1-\sin z}.
\end{equation}
The above formula for $A_1(z)$ shows that $A_1(z)$ has a unique singularity  of smallest modulus, at $z=\pi/2$. Therefore, the exponential growth rate of the coefficients of $A_1(z)$ is $2/\pi$. At $z=\pi/2$, the power series $A_1(z)$ has a pole of order two, since the denominator has a double root at that point, while the numerator is non-zero there.

Therefore, we can apply Lemma \ref{doubleres} with
 $f(z)=12z\sin z +12\cos z -12 -3z^2\cos z -z^3$ and $g(z)=6(1-\sin z)$. 
Then $f(\pi/2)=6\pi-\frac{\pi^3}{8} -12$, while $g''(\pi/2)=6$.
Hence Lemma \ref{doubleres} shows that the dominant term of $A_1(z)$ is
of the form 
\[\frac{2\pi - \frac{\pi^3}{24} -4}{(z-\frac{\pi}{2})^2}=
\frac{2\pi - \frac{\pi^3}{24} -4}{(\pi/2)^2} \cdot
\sum_{n\geq 0}(n+1)\frac{z^n}{(\pi/2)^n}.\]
This implies that
\[\frac{A_{1,n}}{n!} \sim (n+1)\cdot \frac{2\pi - 
\frac{\pi^3}{24} -4}{(\frac{\pi}{2})^{n+2}}.\]
Comparing this to (\ref{eulerprecise}), we get the following theorem. 

\begin{theorem} \label{rank1} The equality
\[a_1=\lim_{n\rightarrow \infty} \frac{A_{1,n}}{(n+1)E_n} = 2-\frac{\pi^2}{24}-
\frac{4}{\pi}\approx 0.3155269391\]
holds.
\end{theorem}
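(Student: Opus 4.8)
The plan is to read off the limit $a_1$ directly from the two coefficient asymptotics already established before the statement, so that the remaining work is a short cancellation. From the closed form (\ref{formofv}) together with Lemma \ref{doubleres} we have
\[\frac{A_{1,n}}{n!} \sim (n+1)\cdot \frac{2\pi - \frac{\pi^3}{24} -4}{(\pi/2)^{n+2}},\]
and from (\ref{eulerprecise}) we have $\frac{E_n}{n!} \sim \frac{4}{\pi}\left(\frac{2}{\pi}\right)^{n}$. Since $a_1$ is by definition $\lim_{n\to\infty} \frac{A_{1,n}}{(n+1)E_n}$, I would form this ratio, cancel the common factor $n!$ from numerator and denominator and the common factor $(n+1)$, and then combine the purely exponential parts.

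Carrying out the algebra, I would write $(\pi/2)^{n+2}=\pi^{n+2}/2^{n+2}$ so that the ratio becomes
\[\left(2\pi - \frac{\pi^3}{24} -4\right)\cdot \frac{2^{n+2}}{\pi^{n+2}}\cdot \frac{\pi^{\,n+1}}{4\cdot 2^{n}}.\]
The powers of $2$ and of $\pi$ telescope, leaving a single factor $1/\pi$, and the expression collapses to $\left(2\pi - \frac{\pi^3}{24} -4\right)/\pi = 2 - \frac{\pi^2}{24} - \frac{4}{\pi}$, which is the asserted value (and agrees with the stated numerical approximation $0.3155269391$).

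The only step that is not purely formal is the passage from the principal part of $A_1(z)$ at $z=\pi/2$ to the asymptotics of its Taylor coefficients, and this is exactly the obstacle I would address with the same argument used for Theorem \ref{rank0}. The denominator $1-\sin z$ of $A_1(z)$ vanishes precisely when $z=\pi/2+2\pi k$, so on the circle $|z|=\pi/2$ the only singularity is $z=\pi/2$ (note that $-\pi/2$ is a zero of $1+\sin z$, not of $1-\sin z$); moreover the numerator $f(\pi/2)=6\pi-\frac{\pi^3}{8}-12\neq 0$, so $\pi/2$ is genuinely a double pole and is the unique dominant singularity. Subtracting the principal part then leaves a function analytic in a disk of radius strictly larger than $\pi/2$, whose coefficients grow at exponential rate strictly below $2/\pi$; hence the principal part governs the asymptotics, the discarded terms are exponentially negligible against both $A_{1,n}/n!$ and $E_n/n!$, and the ratio computation above yields the limit rigorously.
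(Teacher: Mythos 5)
Your proposal is correct and follows essentially the same route as the paper: both start from the closed form (\ref{formofv}), apply Lemma \ref{doubleres} at the unique dominant (double) pole $z=\pi/2$ to get $\frac{A_{1,n}}{n!} \sim (n+1)\bigl(2\pi - \frac{\pi^3}{24} -4\bigr)\big/(\pi/2)^{n+2}$, and then divide by the Euler-number asymptotics (\ref{eulerprecise}) to obtain $2-\frac{\pi^2}{24}-\frac{4}{\pi}$. Your extra care in verifying that $-\pi/2$ is not a pole of $A_1(z)$ and that the analytic remainder after subtracting the principal part is exponentially negligible simply makes explicit the singularity-analysis step the paper leaves implicit in its appeal to Lemma \ref{doubleres}.
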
 

\begin{remark} \label{nointegral}  Note that it directly follows from the argument we used to prove (\ref{formfora1}) that if $R_1(z)$ is the exponential generating function for the number of non-plane 1-2 trees on vertex set $[n]$ whose root is of rank 1, then
\[R_1'(z)= zE(z) -\frac{z^2}{2}=z\tan z + z\sec z -\frac{z^2}{2}.\]
Unfortunately, this closed form for $R_1'(z)$ does not lead to a closed form for $R_1(z)$, since $R_1(z)$ does not have an elementary antiderivative. 
\end{remark}

\subsection{Vertices of rank $\ge2$} \label{higher-nonplane}

The methods that we used to enumerate vertices of rank 0 and rank 1 will fail for vertices of higher rank, because we are not able to solve the linear differential equations analogous to (\ref{formfora1}), since the relevant functions have no elementary antiderivatives. Remark \ref{nointegral} shows how early these kind of problems start occuring; we are not
even able to state the equation analogous to (\ref{formfora1}) in an explicit form. 

Therefore, we apply a new method to prove that the limit $a_k=\lim_{n\rightarrow \infty}a_{n,k}$ exists. We will then be able to  approximate $a_k$
from above and below.  Our first simple notion is the following. 
 Each vertex of a tree is the root of a unique subtree, which we will
call {\em the subtree of the vertex}. In other words, the subtree of a vertex $v$ consists of all descendants of $v$, including $v$ itself. The subtrees of leaves consist of one vertex only.

For a fixed positive integer $r$, let $V_{n,r}$ be the probability that a randomly selected vertex in a random
non-plane 1-2 tree of size $n$ is the root of a subtree of size exactly $r$. For instance, if $n=3$, then $V_{n,1}=1/2$, 
$V_{n,2}=1/6$, and $V_{n,3}=1/3$. 

Vertices of a given subtree size are much easier to enumerate than vertices of a given rank, because the number of
ways in which a vertex can have a subtree of size $r$ is a fixed number, namely the Euler number $E_n$, once the
set of labels in that subtree is selected. 

\begin{proposition} \label{fixedr}
For all positive integers $r$, the limit
\[v_r = \lim_{n\rightarrow \infty} V_{n,r} \] exists.
\end{proposition}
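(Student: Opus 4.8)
The plan is to set up an exponential generating function for the pairs $(v,T)$ where $v$ is the root of a subtree of size exactly $r$, and then to analyze its dominant singularity exactly as was done for $A_0(z)$ and $A_1(z)$. First I would introduce $B_r(z)=\sum_{n\geq 0} B_{r,n} \frac{z^n}{n!}$, where $B_{r,n}$ is the total number of such marked pairs over all non-plane 1-2 trees on $[n]$. The key combinatorial observation, already stressed before the statement, is that the subtree hanging at $v$ is an \emph{arbitrary} non-plane 1-2 tree on its label set, so the number of ways to complete a marked vertex into a subtree of size $r$ is exactly $E_r$ once the $r$ labels are chosen. Decomposing $T$ by the position of $v$ relative to the root, I expect a linear differential equation of the shape
\begin{equation}
B_r'(z) = B_r(z)\, E(z) + c_r z^{r-1},\label{eq:Brplan}
\end{equation}
where the inhomogeneous term encodes the contribution of the trees in which $v$ \emph{is} the root (so the whole tree has size $r$), contributing a polynomial of degree $r-1$ after differentiation, with $c_r$ proportional to $E_r/(r-1)!$. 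The point is that, unlike \eqref{formfora1}, I do not need a closed form: I only need to solve \eqref{eq:Brplan} by the standard integrating-factor method $B_r(z)=E(z)\int c_r z^{r-1} E(z)^{-1}\,dz$ and read off its singularity structure.

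The second step is the singularity analysis. Since $E(z)=\frac{1+\sin z}{\cos z}$ has its dominant singularities at $z=\pm\pi/2$, the integrating factor $1/E(z)=\frac{\cos z}{1+\sin z}=1-\sin z$ (up to the usual identity) is entire, so $\int c_r z^{r-1}(1-\sin z)\,dz$ is entire as well. Hence the only singularities of $B_r(z)$ come from the factor $E(z)$, and the one of smallest modulus is the double pole at $z=\pi/2$, coming from the double root of $1-\sin z$ in the denominator of $E(z)$. I would then apply Lemma \ref{doubleres} with this $z_0=\pi/2$, exactly as in the proofs of Theorem \ref{rank0} and Theorem \ref{rank1}, to extract the coefficient $D_r$ of the $(z-\pi/2)^{-2}$ term. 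The expansion
\[
\frac{D_r}{(z-\pi/2)^2}=\frac{D_r}{(\pi/2)^2}\sum_{n\geq 0}(n+1)\frac{z^n}{(\pi/2)^n}
\]
then gives $\frac{B_{r,n}}{n!}\sim (n+1) D_r (2/\pi)^{n+2}$, and comparing with \eqref{eulerprecise} shows that
\[
v_r=\lim_{n\to\infty} V_{n,r}=\lim_{n\to\infty}\frac{B_{r,n}}{(n+1)E_n}
\]
exists and equals an explicit constant (a rational multiple of $D_r/\pi$). In particular the limit not only exists but is computable, which is stronger than the stated claim.

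I expect the main obstacle to be bookkeeping in the combinatorial decomposition \eqref{eq:Brplan} rather than anything analytic: I must argue carefully that deleting the root of $T$ splits a marked pair into (a marked pair counted by $B_r(z)$ together with a residual tree counted by $E(z)$) when $v$ is below the root, plus the boundary cases when $v$ is the root itself or when the root has fewer than two children, so as to pin down the exact inhomogeneous polynomial and avoid double-counting—precisely the subtlety that arose in deriving \eqref{formfora1}. Once the correct equation is in hand, the crucial and fortunate feature is that the integrating factor $1/E(z)$ is entire, so no genuine antiderivative obstruction of the kind described in Remark \ref{nointegral} occurs here; this is exactly why subtree-size statistics are tractable where rank statistics are not, and it is what makes the singularity of $B_r(z)$ at $\pi/2$ as clean as those of $A_0$ and $A_1$.
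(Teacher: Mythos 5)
Your proposal takes essentially the same route as the paper's own proof of Proposition \ref{fixedr}: the paper likewise sets up the linear differential equation $V_r'(z)=V_r(z)E(z)+E_r z^{r-1}/(r-1)!$ (its equation (\ref{treesize})), solves it by an integrating factor, notes that the numerator of the resulting expression is entire while the denominator $1-\sin z$ has a double zero at $z=\pi/2$, and then applies Lemma \ref{doubleres} together with (\ref{eulerprecise}), exactly as in Theorems \ref{rank0} and \ref{rank1}. Your observation that the correction term being a polynomial is what saves the day (in contrast with Remark \ref{nointegral}) is precisely the point the paper makes as well.

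However, you should repair one algebraic step, because as written it is false and it carries the analytic weight of the argument. The integrating factor for $B_r'(z)-E(z)B_r(z)=c_rz^{r-1}$ is not $1/E(z)$ but
\[
\mu(z)=\exp\left(-\int E(z)\,dz\right)=1-\sin z ,
\]
and the identity you invoke, $\frac{\cos z}{1+\sin z}=1-\sin z$, is not true: in fact $\frac{\cos z}{1+\sin z}=\frac{1-\sin z}{\cos z}=\sec z-\tan z$, which is \emph{not} entire (it has a pole at $z=-\pi/2$, of the same modulus as $\pi/2$) and is not an integrating factor for this equation, since it satisfies $\mu'=-(\sec z)\mu$ rather than $\mu'=-E(z)\mu$. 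Consequently your solution formula $B_r(z)=E(z)\int c_rz^{r-1}E(z)^{-1}\,dz$ is also incorrect; the correct form is
\[
B_r(z)=\frac{c_r\int z^{r-1}(1-\sin z)\,dz + C}{1-\sin z}.
\]
Fortunately, your conclusion survives unchanged once this is fixed: the true integrating factor $1-\sin z$ \emph{is} entire, so the numerator above is entire, the singularities of $B_r(z)$ occur only at the double zeros of $1-\sin z$ (namely $z=\pi/2+2k\pi$), the dominant one is the double pole at $z=\pi/2$, and Lemma \ref{doubleres} plus comparison with (\ref{eulerprecise}) then yields the existence (indeed an explicit, computable value) of $v_r$, just as in the paper.
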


\begin{proof} Let $V_r(z)=\sum_{n\geq 0} V_{n,r}z^n/n!$. Then by the Product formula, we have
\[V_r'(z)=V_r(z)E(z) + f_r'(z),\] where $f_r(z)$ is the generating function for the number of trees in which {\em the root}
has a "subtree" of size $r$. That is, $f_r(z)=E_r z^r /r!$, so $f_r'(z)=E_rz^{r-1}/(r-1)!$, and the last displayed equation 
becomes 
 \begin{equation} \label{treesize} V_r'(z)=V_r(z)E(z) + E_r\frac{z^{r-1}}{(r-1)!} .\end{equation}
This is a first order linear differential equation with initial condition $V_r(0)=0$. Its solution is 
\[V_r(z)=\frac{E_r}{(r-1)!} \frac{\int z^r(1-\sin z) \  dz }{1-\sin z} +\frac{C}{1-\sin z},\] where $C$ is to be selected 
so that $V_k(0)=0$ holds.
 
The integral in the numerator can be explicitly computed using  the well-known formula
\[\int z^r \sin z \ dz  = \cos z \sum_{i=0}^{r/2} (-1)^{i+1}z^{r-2i} \frac{r!}{(r-2i)!} \] \[+ \sin z\sum_{i=0}^{(r-1)/2}
(-1)^i z^{r-2i-1} \frac{r!}{(r-2i-1)!}.\] 
This means that $V_r(z)$ has a unique singularity of smallest modulus (a double pole) at $z_0=\pi/2$. The rest of the argument uses Lemma \ref{doubleres} at $z_0=\pi/2$ to determine $v_r$, in the same way as we did in the proofs of 
Theorems \ref{rank0} and \ref{rank1}.
\end{proof}

\begin{remark} \label{polynomial} 
 Note that we are able to explicitly solve the linear differential equation (\ref{treesize}) because 
its "correction term", that is, the summand that does not contain $V'_r(z)$ or $V_r(z)$, is a {\em polynomial}. 
The same argument used here would work for any polynomial instead of $f_r'(z)E_rz^{r-1}/(r-1)!$.  \end{remark}

Proposition \ref{fixedr} shows that the limit $v_r$ exists for every fixed positive integer $r$. 
As the $v_r$ are all positive real numbers, and $\sum_r v_r\leq 1$, the sum  $\sum_{r=1}^\infty v_r $ is convergent. However,
{\em what} is the value of that sum? The exact formulas we obtain for each $v_r$ from Proposition \ref{fixedr} are too complicated to be useful for the computation of that sum. Note that it is not true in every tree variety that the
analogously defined sum is equal to 1. A simple counterexample is the family of rooted trees in which every
non-leaf vertex has exactly one child. However, for our non-plane 1-2 trees, the sum turns out to be 1, although
not in a trivial way. This is the content of the following Theorem that has been conjectured by the present authors, and has recently been proved by  Svante Janson \cite{jansonpers}. 

\begin{theorem} \label{tight}
The equality \[\sum_{r=1}^\infty v_r=1 \]
holds.
\end{theorem}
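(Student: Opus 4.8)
The plan is to sidestep the unwieldy closed forms for the individual $v_r$ and instead sum an \emph{unevaluated} integral representation of them. First I would record the explicit value of $v_r$ produced by the argument of Proposition~\ref{fixedr}. Solving the linear equation \eqref{treesize} with the integrating factor $1-\sin z$ (here one uses $\int E(z)\,dz=-\log(1-\sin z)$, so that $\big((1-\sin z)V_r\big)'=(1-\sin z)E_r z^{r-1}/(r-1)!$) and the initial condition $V_r(0)=0$ gives
\[
V_r(z)=\frac{E_r}{(r-1)!}\cdot\frac{\int_0^z t^{r-1}(1-\sin t)\,dt}{1-\sin z}.
\]
The denominator $g(z)=1-\sin z$ has $g(\pi/2)=g'(\pi/2)=0$ and $g''(\pi/2)=\sin(\pi/2)=1$, so Lemma~\ref{doubleres} shows that the coefficient of $(z-\pi/2)^{-2}$ in $V_r(z)$ is $D_r=\tfrac{2E_r}{(r-1)!}\int_0^{\pi/2}t^{r-1}(1-\sin t)\,dt$. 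Extracting coefficients exactly as in the proofs of Theorems~\ref{rank0} and~\ref{rank1} and comparing with \eqref{eulerprecise} then yields the clean formula
\[
v_r=\frac{2}{\pi}\cdot\frac{E_r}{(r-1)!}\int_0^{\pi/2} t^{r-1}(1-\sin t)\,dt .
\]

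With this representation in hand, the summation becomes transparent. Every summand is nonnegative on $[0,\pi/2]$, so Tonelli's theorem permits interchanging summation and integration:
\[
\sum_{r=1}^\infty v_r=\frac{2}{\pi}\int_0^{\pi/2}(1-\sin t)\sum_{r=1}^\infty \frac{E_r}{(r-1)!}\,t^{r-1}\,dt .
\]
The inner series is precisely $E'(t)=\sum_{r\ge 1}E_r t^{r-1}/(r-1)!$, and the decisive observation is that $E'(t)=\tfrac{1+\sin t}{\cos^2 t}=\tfrac{1}{1-\sin t}$, whence $(1-\sin t)E'(t)\equiv 1$. The integrand therefore collapses to the constant $1$, and I would conclude
\[
\sum_{r=1}^\infty v_r=\frac{2}{\pi}\int_0^{\pi/2}1\,dt=\frac{2}{\pi}\cdot\frac{\pi}{2}=1,
\]
which is exactly the asserted identity.

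The main obstacle is conceptual rather than computational. Evaluating each integral $\int_0^{\pi/2}t^{r-1}(1-\sin t)\,dt$ in closed form produces the complicated expressions referred to just before the theorem, and summing those directly appears hopeless; the trick is to resist evaluating and to sum first, so that the singularity-cancelling factor $1-\sin t$ meets the pole of $E'(t)$ at $\pi/2$ and the integrand degenerates to a constant. The interchange is justified for free by nonnegativity, and the convergence of $\sum_r E_r t^{r-1}/(r-1)!$ to $E'(t)$ on $[0,\pi/2)$ together with the boundedness of $(1-\sin t)E'(t)$ takes care of the endpoint behaviour. As a consistency check, the same phenomenon is visible at the level of generating functions: summing \eqref{treesize} over $r\ge 1$ gives $\Phi'=\Phi E+E'$ for $\Phi=\sum_r V_r$, whose solution with $\Phi(0)=0$ is $\Phi(z)=zE'(z)=z/(1-\sin z)$, and reading off its double pole at $\pi/2$ via Lemma~\ref{doubleres} gives $\sum_r D_r=\pi$, hence $\sum_r v_r=1$ once more.
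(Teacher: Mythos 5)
Your proof is correct, and it takes a genuinely different route from the paper's. The paper's argument (supplied by Janson) is probabilistic and never evaluates a single $v_r$: it combines a leaf-density bound (Proposition \ref{every}), an inductively proved uniform moment bound $\E(\sqrt{Z_n})\le 100$ (Lemma \ref{estimate}), and Markov's inequality to get tightness of subtree sizes uniformly in $n$, so that the assumption $\sum_r v_r<1$ yields a contradiction. You instead push the computation sketched in Proposition \ref{fixedr} to completion: the integrating factor $1-\sin z$ gives the closed form $v_r=\frac{2}{\pi}\frac{E_r}{(r-1)!}\int_0^{\pi/2}t^{r-1}(1-\sin t)\,dt$ (incidentally correcting the exponent in the paper's displayed solution, where $z^r$ should read $z^{r-1}$; your formula checks out against $v_1=1-\frac{2}{\pi}$ from Theorem \ref{rank0}), and then Tonelli plus the identity $(1-\sin t)E'(t)\equiv 1$ collapse the sum to $\frac{2}{\pi}\cdot\frac{\pi}{2}=1$. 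That identity is structural rather than fortuitous: since $E''=E\,E'$, one has $\int E\,dz=\log E'$, so the integrating factor is exactly $1/E'$, while the correction terms sum to exactly $E'$; the same mechanism persists for plane 1-2 trees, where the paper's $Q(z)$ equals $\tfrac{3}{4}/B'(z)$, so your argument would also cover the plane case and make Proposition \ref{pevery} and the Chebyshev sum inequality unnecessary. What your route buys: a short, self-contained proof that stays entirely within the analytic toolkit already developed, and explicit values of $v_r$ that feed directly into the numerical bounds of Corollary \ref{numerical}. What the paper's route buys: robustness --- it requires no integration in closed form at all and produces tail bounds \eqref{markovineq} that are uniform in $n$, so it is the argument that would survive in tree varieties where the integrating-factor cancellation is unavailable. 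One small caveat: your closing ``consistency check'' identifies the double-pole coefficient of $\sum_r V_r(z)$ with $\sum_r D_r$, an interchange of summation and singular expansion that is not justified there (and, since $\sum_r V_{n,r}=1$ holds trivially for every $n$, that check is essentially circular); but as your main argument does not rely on it, this does not affect correctness.
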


In order to prove Theorem \ref{tight}, we first need the following fact. 

\begin{proposition} \label{every} For all $n$, the expected number of leaves  in a  random  non-plane 1-2 tree on vertex set $[n]$ is
 is at least $1/4$.   \end{proposition}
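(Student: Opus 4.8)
The plan is to bound the expected number of leaves, which is exactly $A_{0,n}/E_n$, from below uniformly in $n$. The quickest route is purely combinatorial: in any non-plane 1-2 tree on $[n]$ with $n\ge 1$, the vertex carrying the label $1$ can have no children, since every child must receive a smaller label than its parent and $1$ is minimal. Hence vertex $1$ is always a leaf, every such tree has at least one leaf, and therefore $A_{0,n}/E_n\ge 1\ge 1/4$. (The degenerate case $n=0$, where $[n]=\emptyset$, is vacuous or excluded.) This already settles the statement with room to spare; the only point needing care is the edge case together with the remark that ``at most two children'' permits zero children.

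Since the stated constant $1/4$ is far from tight, I would also record the argument that produces it directly from the machinery already in place, as it stays inside the generating-function framework and is presumably what the constant reflects. Extracting coefficients from the differential equation $A_0'(z)=E(z)A_0(z)+1$ gives, for $n\ge 1$, the convolution recurrence $A_{0,n+1}=\sum_{k=0}^{n-1}\binom{n}{k}E_k A_{0,n-k}$, where the $k=n$ term drops out because $A_{0,0}=0$. I would then prove $A_{0,n}\ge E_n/4$ by strong induction. The base case $A_{0,1}=E_1=1$ is immediate, and in the inductive step one replaces each $A_{0,n-k}$ by $E_{n-k}/4$ and invokes the Euler self-convolution identity $\sum_{k=0}^{n}\binom{n}{k}E_kE_{n-k}=2E_{n+1}$, itself a consequence of $E'(z)=\tfrac12\bigl(1+E(z)^2\bigr)$.

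Carrying this out, the inductive hypothesis yields
\[
A_{0,n+1}\ \ge\ \frac14\sum_{k=0}^{n-1}\binom{n}{k}E_kE_{n-k}
=\frac14\bigl(2E_{n+1}-E_n\bigr)=\frac{E_{n+1}}{4}+\frac{E_{n+1}-E_n}{4},
\]
so the desired bound $A_{0,n+1}\ge E_{n+1}/4$ reduces to the monotonicity $E_{n+1}\ge E_n$. The genuine content of the argument, and the reason the surviving constant is exactly $1/4$ rather than something larger, is the loss incurred by discarding the $k=n$ term of the self-convolution: that term contributes $E_n$, and the surplus $\tfrac14(E_{n+1}-E_n)$ is precisely what absorbs it. I expect the bookkeeping around this missing term, together with verifying $E_{n+1}\ge E_n$ (immediate from the same identity, since its $k=0$ and $k=n$ terms alone already sum to $2E_n$), to be the only step requiring attention; everything else is routine. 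In short, the uniform-in-$n$ control is the real point, and it is obtained either trivially from the label-$1$ leaf or cleanly from this monotonicity.
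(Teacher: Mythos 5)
You have proved the wrong statement, though the fault lies partly with the paper: the proposition as printed (``is is at least $1/4$'') contains a typo. The claim that is actually meant, and the only one strong enough for its later use, is that the expected number of leaves is at least $n/4$ --- equivalently, that a uniformly random vertex of a uniformly random tree on $[n]$ is a leaf with probability at least $1/4$. This is unambiguous from context: the proof of Lemma \ref{estimate} invokes the proposition as ``there is an at least $1/4$ chance of $v$ being a leaf,'' the remark following the statement compares it with the stronger bound $n/3$, and the proof of Proposition \ref{pevery} cites it as giving ``at least $n/4$.'' Both of your arguments establish only the literal reading $A_{0,n}\ge E_n/4$ (indeed your first argument, that the vertex labelled $1$ is always a leaf, gives the stronger $A_{0,n}\ge E_n$, making the convolution induction redundant). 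Either way, the bound you obtain on the probability that a random vertex is a leaf is only $1/n$, which tends to $0$ and is useless in the Markov-inequality argument of Lemma \ref{estimate}. So the essential content of the proposition --- a leaf-density bound uniform in $n$ --- is missing from your proof.

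The paper's argument is genuinely different, and it is where the real work lies. In a 1-2 tree, the number of leaves exceeds the number of two-child vertices by exactly one, so a tree with $s$ one-child vertices has $(n+1-s)/2$ leaves; it therefore suffices to show that the expected number of one-child vertices is at most $n/2$. This is proved by induction on $n$, conditioning on whether the root has exactly one child: that event has probability $p_n=E_{n-1}/E_n$, and $p_n\le 1/2$ for $n\ge 3$ because the Euler numbers are log-convex (so $p_n$ is decreasing, and $p_3=1/2$). Note also that your convolution induction does not simply transfer to the correct target: assuming $A_{0,m}\ge\frac{m}{4}E_m$ for all $m\le n$ and using the identity $\sum_{k=0}^{n-1}\binom{n}{k}(n-k)E_kE_{n-k}=nE_{n+1}$, one obtains only $A_{0,n+1}\ge\frac{n}{4}E_{n+1}$, which falls short of the required $\frac{n+1}{4}E_{n+1}$; the induction hypothesis would have to be strengthened (or replaced by an argument like the paper's) before it closes.
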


Note that in Theorem \ref{rank0}, we proved that much more is true for large $n$. However, the statement of Proposition
\ref{every} is true for {\em every} $n$.  With a little bit of additional work, it is possible to prove that in fact, 
the expected number of leaves is at least $n/3$ for every $n$, but the weaker claim of Proposition \ref{every} suffices
for our purposes. 

\begin{proof} Every such tree contains exactly one more leaves than vertices with two children. Therefore, it suffices to show 
that the expected number of vertices with one child is not more than $n/2$. We prove this by induction on $n$. Let $M_n$ be the 
expected number of vertices with one child  in a random   tree on vertex set $[n]$. Then $M_1=0$, $M_2=1$, and
$M_3=1$, so the statement holds if $n\leq 3$. Now let us assume that $n>3$.  Let $p_n$ be the probability that the {\em root} of a non-plane 1-2 tree on $[n]$ has exactly one child. Then 
by conditional expectations, we have 
\[M_n  \leq (1-p_n) \cdot \frac{n-1}{2} +p_n \left(\frac{n-1}{2}+1\right)=\frac{n-1}{2}+p_n\leq \frac{n}{2}.\]
The last inequality follows, since $p_n=E_{n-1}/E_{n}$, the ratio of two consecutive Euler numbers, and the Euler
numbers are known to be log-convex \cite{Liu}. So the sequence of the numbers $p_n$ is decreasing. As $p_3=1/2$, it follows that $p_n\leq 1/2$ if
$n\geq 3$.  
\end{proof}

Let $Z_n$ be a random variable defined on the set of all vertices of all
 non-plane 1-2 trees on vertex set $[n]$, so that $Z_n(v)$ is the size of the subtree rooted at $v$. 

\begin{lemma} \label{estimate} For all $n$, the inequality $E(\sqrt{Z_n}) \leq  100$ holds.
\end{lemma}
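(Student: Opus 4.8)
The plan is to study the random variable $Z_n$ through its distribution and to reduce the claim to a uniform tail estimate. Writing $V_{n,s}=\Pr(Z_n=s)$ for the probability that a uniformly chosen vertex of a uniformly chosen tree is the root of a subtree of size $s$, we have $\E(\sqrt{Z_n})=\sum_{s=1}^{n}\sqrt{s}\,V_{n,s}$. First I record why the obvious estimate is not enough: by the Cauchy--Schwarz inequality $\E(\sqrt{Z_n})\le\sqrt{\E(Z_n)}$, but $\E(Z_n)=\sum_s s\,V_{n,s}$ grows like $\log n$ (since, as the next step shows, $V_{n,s}\approx v_s=\Theta(1/s^2)$) and is therefore unbounded, so the square root is essential. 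Instead I would use Abel summation,
\[\E(\sqrt{Z_n})=\sum_{s\ge1}\bigl(\sqrt{s}-\sqrt{s-1}\bigr)\,\Pr(Z_n\ge s),\]
together with $\sqrt{s}-\sqrt{s-1}\le \tfrac{1}{2\sqrt{s-1}}$ for $s\ge2$, to reduce everything to a bound $\Pr(Z_n\ge s)\le C/s$ that is uniform in $n$. Such a bound yields $\E(\sqrt{Z_n})\le 1+\tfrac{C}{2}\sum_{s\ge2}\tfrac{1}{s\sqrt{s-1}}$, a universal constant far below $100$.

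The core of the argument is to locate the decay rate of $V_{n,s}$ in $s$, and here I would use the explicit solution of the differential equation (\ref{treesize}). Let $C_s(z)$ be the exponential generating function for the total number of pairs $(T,v)$ with $Z_T(v)=s$; by the reasoning proving Proposition \ref{fixedr} it satisfies (\ref{treesize}), whose solution is
\[C_s(z)=\frac{E_s}{(s-1)!}\cdot\frac{\int_0^z t^{s-1}(1-\sin t)\,dt}{1-\sin z}.\]
This function has a double pole at $z=\pi/2$ and no other singularity of that modulus, so Lemma \ref{doubleres} applies exactly as in the proofs of Theorems \ref{rank0} and \ref{rank1}, and comparing with (\ref{eulerprecise}) gives
\[v_s=\lim_{n\to\infty}V_{n,s}=\frac{2E_s}{\pi\,(s-1)!}\int_0^{\pi/2}t^{s-1}(1-\sin t)\,dt.\]
The decisive point is that the weight $1-\sin t$ has a \emph{double} zero at $t=\pi/2$; consequently the last integral is of order $s^{-3}(\pi/2)^{s}$, and since $E_s/(s-1)!\sim s\cdot(4/\pi)(2/\pi)^{s}$ by (\ref{eulerprecise}), one obtains $v_s=\Theta(1/s^{2})$. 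In particular $\sum_s\sqrt{s}\,v_s<\infty$ and $\sum_{s\ge r}v_s=O(1/r)$, which is the limiting form of the tail bound sought above.

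The remaining and hardest step is to make this tail bound \emph{uniform} in $n$, since the singularity analysis controls each $V_{n,s}$ only in the limit $n\to\infty$, whereas the lemma asks for a bound valid for all $n$ at once. I expect this to be the main obstacle, and I would attack it in one of two ways. The first is an effective version of the computation above: monitoring the error terms in the passage from the double pole (Lemma \ref{doubleres}) to the coefficients, and using that $1-\sin t$ is nonnegative with a double zero at $\pi/2$, one should obtain the clean uniform estimate $V_{n,s}\le C/s^{2}$ for all $n$ and $s$, which immediately gives $\E(\sqrt{Z_n})\le C\sum_s s^{-3/2}<\infty$. The second, more combinatorial route is to close the root-deletion recursion
\[a_n=\sqrt{n}\,E_n+\sum_{s=1}^{n-1}\binom{n-1}{s}E_{n-1-s}\,a_s,\qquad a_n:=nE_n\,\E(\sqrt{Z_n}).\]
A naive induction with the ansatz $a_s\le K s E_s$ fails, because the root's contribution $\sqrt{n}\,E_n$ forces $K\ge\sqrt{n}$; and no per-tree bound can work, since a path has $\Theta(n)$ vertices of large subtree size. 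One must therefore exploit that the child-subtree-size distribution $t_n(s)=\binom{n-1}{s}E_sE_{n-1-s}/E_n$ concentrates on very small $s$ together with the single near-full value $s=n-1$ (a feature traceable to the log-convexity of the Euler numbers and the abundance of leaves used in Proposition \ref{every}), which is precisely what keeps $\E(\sqrt{Z_n})$ bounded while $\E(Z_n)$ diverges. Either way, the limiting constant $\sum_s\sqrt{s}\,v_s$ is small (numerically around $5$), the generous bound $100$ leaves ample room, and the finitely many small values of $n$ can be checked directly.
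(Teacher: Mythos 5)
Your analysis of the limiting distribution is correct---the explicit solution of (\ref{treesize}) does give $v_s=\frac{2E_s}{\pi(s-1)!}\int_0^{\pi/2}t^{s-1}(1-\sin t)\,dt=\Theta(1/s^2)$---and you have correctly identified that the entire difficulty of the lemma is uniformity in $n$. But that is precisely the step your proposal does not carry out, in either route, so there is a genuine gap. Worse, the bridging claim of your first route, a uniform bound $V_{n,s}\leq C/s^{2}$ for all $n$ and $s$, is false: the root of every tree has subtree size exactly $n$, so $V_{n,n}=1/n$, which exceeds $C/n^{2}$ as soon as $n>C$. (This is the same obstruction behind your own observation that no per-tree bound can work.) One could try to salvage the route by aiming directly at the tail bound $\Pr(Z_n\geq s)\leq C/s$ and treating the range where $s$ is comparable to $n$ separately, but no such argument appears in your writeup; ``monitoring the error terms'' of the singularity analysis uniformly in both $s$ and $n$ is a substantial project, not a remark.

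Your second route is much closer to the paper's actual proof, and you even name the decisive ingredient---the abundance of leaves from Proposition \ref{every}---but you stop exactly where the proof has to start. The paper closes the recursion by strong induction on $n$ with the \emph{strengthened} hypothesis $E(\sqrt{Z_n})\leq 100-\frac{90}{\sqrt{n}}$, conditioning on three events: $v$ is a leaf (probability at least $1/4$ by Proposition \ref{every}, contributing $1$), $v$ is the root (probability $1/n$, contributing $\sqrt{n}$), and otherwise $v$ lies in a root-child subtree of some size $r<n$, to which the induction hypothesis applies since $100-\frac{90}{\sqrt{r}}\leq 100-\frac{90}{\sqrt{n}}$. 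This yields
\[E(\sqrt{Z_n})\ \leq\ \frac34\left(100-\frac{90}{\sqrt{n}}\right)+\frac14+\frac{1}{\sqrt{n}}\ =\ 75.25-\frac{66.5}{\sqrt{n}}\ \leq\ 100-\frac{90}{\sqrt{n}}.\]
The idea you were missing is that the constant fraction of leaves, whose contribution is $1$ rather than the inductive bound, creates a constant-size surplus that absorbs the root's contribution of $\frac{1}{n}\cdot\sqrt{n}\leq 1$, and the $-90/\sqrt{n}$ term is what makes the induction close; no generating functions, no uniform tail estimates, and no concentration properties of the child-size distribution are needed.
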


\begin{proof} We will use strong induction to  prove the stronger inequality 
\begin{equation} E(\sqrt{Z_n}) \leq 100 -\frac{90}{\sqrt{n}}. \end{equation}
This inequality clearly holds for $n=1$. Now let us assume that it holds for all positive integers $r<n$ and prove it for $n$. 

Let $T$ be a tree of size $n$, selected uniformly at random. Let $v$ be a vertex of $T$, selected in the same way. 

Then by Proposition \ref{every}, there is an at least $1/4$ chance of $v$ being a leaf. There is an  $1/n$ chance of $v$ being the root. There is a less than $3/4$ chance of $v$ being another vertex, in which case the induction hypothesis
applies to the subtree of $v$, with some $r<n$ playing the role of $n$. Therefore,  
\[E(\sqrt{Z_n}) \leq \frac{3}{4}  \cdot \left (100-\frac{90}{\sqrt{n}} \right ) + \frac{1}{4}+ \frac{1}{n}\cdot\sqrt{ n} \]
\[=75.25-\frac{66.5}{\sqrt{n}}\]
\[\leq   100 -\frac{90}{\sqrt{n}},\]
where the  last estimate holds as $n\geq 1$. 
\end{proof}

Now we are ready to prove Theorem \ref{tight}. 
\begin{proof}(of Theorem \ref{tight})
By Lemma \ref{estimate} and by Markov's inequality, we know that for all positive constants $C$, and for all $n$, we have
\[ Pr(\sqrt{Z_n}> 100C ) \leq 1/C, \]
so
\begin{equation} \label{markovineq} Pr(Z_n>10000C^2 ) \leq 1/C.\end{equation}

Let us now assume that $\sum_{k=1}^\infty v_k = \alpha < 1$. That means that for all $N$, the inequality 
$\sum_{k=1}^N v_k < \alpha $ holds. In other words, if $n$ is large enough, then in an average non-plane 
1-2 tree on $[n]$, there are at least $(1-\alpha)n/2$ vertices whose subtree is of size more than $N$.  That is, 
\begin{equation} \label{largeenough} Pr(Z_n>N) \geq \frac{1-\alpha}{2}.\end{equation}
 Now select $C$ to be a positive integer so that $1/C < (1-\alpha)/2$, then select $N=10000C^2$.
Then inequality (\ref{markovineq}) forces \[Pr(Z_n>N) \leq 1/C <(1-\alpha)/2\] for all $n$, while
inequality (\ref{largeenough}) forces $Pr(Z_n>N)\geq (1-\alpha)/2$ for $n$ sufficiently large, which is clearly a contradiction.
\end{proof}

The following is an obvious corollary of Theorem \ref{tight} that we will need soon.

\begin{corollary} \label{ucor}
Let $U_{n,r}=1-\sum_{i=1}^r V_{n,i}$ be the probability that a random vertex of a random non-plane 1-2 tree has a 
subtree of size larger than $r$. Then clearly, 
\[u_r:=\lim_{n\rightarrow \infty} U_{n,r}= 1-\sum_{i=1}^r  v_i. \]
Furthermore, and this is where Theorem \ref{tight} is needed, 
\begin{equation} \label{ulimit} \lim_{r\rightarrow \infty} u_r = 1 - \lim_{r\rightarrow \infty} \sum_{i=1}^r  v_i=0.
\end{equation}
\end{corollary}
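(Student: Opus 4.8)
The plan is to observe that both claimed equalities follow directly from results already established in the excerpt, so the proof amounts to carefully justifying two limit interchanges of genuinely different types. The first equality passes a limit in $n$ through a \emph{finite} sum; the second passes a limit in $r$ through an \emph{infinite} series, and it is only the latter that requires Theorem \ref{tight}.

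For the first equality, I would fix $r$ and write $U_{n,r}=1-\sum_{i=1}^r V_{n,i}$ as a sum of $r+1$ terms, namely the constant $1$ together with the $r$ probabilities $V_{n,1},\dots,V_{n,r}$. Since $r$ does not depend on $n$, this is a fixed finite sum, so I may take the limit $n\to\infty$ termwise. Proposition \ref{fixedr} guarantees that each individual limit $v_i=\lim_{n\to\infty}V_{n,i}$ exists, and therefore
\[
u_r=\lim_{n\to\infty}U_{n,r}=1-\sum_{i=1}^r\lim_{n\to\infty}V_{n,i}=1-\sum_{i=1}^r v_i,
\]
which is the asserted value of $u_r$. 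No summability issue arises here because only finitely many terms are involved.

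For the second equality, I would consider the sequence of partial sums $\sum_{i=1}^r v_i$. Each $v_i$ is a nonnegative real number, so these partial sums form a nondecreasing sequence; moreover they are bounded above by $1$ since $\sum_{r\ge 1}v_r\le 1$. Hence the series $\sum_{i=1}^\infty v_i$ converges, and Theorem \ref{tight} identifies its value as exactly $1$. Consequently
\[
\lim_{r\to\infty}u_r=\lim_{r\to\infty}\Bigl(1-\sum_{i=1}^r v_i\Bigr)=1-\sum_{i=1}^\infty v_i=1-1=0,
\]
as claimed in (\ref{ulimit}).

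As the statement is flagged as an immediate corollary, I do not expect any real obstacle: all of the analytic difficulty has already been absorbed into the proof of Theorem \ref{tight} via Lemma \ref{estimate} and the Markov-inequality argument. The only point worth flagging explicitly is the distinction between the two limits—the interchange with a finite sum in $n$ is automatic, whereas the evaluation of the infinite series in $r$ is precisely where Theorem \ref{tight} is indispensable, since without it one could only conclude that $\lim_{r\to\infty}u_r$ equals some nonnegative number $1-\alpha$ rather than $0$.
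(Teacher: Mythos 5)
Your proof is correct and matches the paper's intended argument exactly: the paper states this corollary without a written proof precisely because, as you show, the first equality is a termwise limit of a finite sum justified by Proposition \ref{fixedr}, and the second is immediate once Theorem \ref{tight} identifies $\sum_{i=1}^\infty v_i = 1$. Your explicit remark that without Theorem \ref{tight} one could only conclude $\lim_{r\to\infty} u_r = 1-\alpha \geq 0$ is the same point the paper makes by the phrase ``this is where Theorem \ref{tight} is needed.''
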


We now return to our main goal, that is, to proving that the limit  $a_k=\lim_{n\rightarrow \infty}a_{n,k}$ exists.
For the rest of this section, we fix the rank $k$ of vertices we are studying, and, to alleviate notation, we do {\em not
add} the index $k$ to all parameters related to these vertices. 

Our main idea is the following. The set $\mathcal R_k$ of all vertices of all trees of size $n$ {\em contains} the
set $\cup_{i=1}^r \mathcal W_{n,i}$, where $\mathcal W_{n,i}$ is the set of all vertices of all trees of size $n$ that
are of rank $k$ and have a subtree of size $i$. On the other hand, $\mathcal R_k$ {\em is contained} in the set
$\left(\cup_{i=1}^r \mathcal W_{n,i} \right) \cup \left( \cup_{i>r} \mathcal V_{n,i} \right)$, where $ \mathcal V_{n,i} $
is the set of all vertices in all trees of size $n$ whose subtree is of size $i$ (but are of any rank). 

Let $W_{n,i}$ be the probability that a random vertex of  a random tree  of size $n$  is of rank $k$ and is the root of 
a subtree  of size $i$.  Let 
\[w_i=\lim_{n\rightarrow \infty} W_{n,i} .\]
The limits $w_i$ exist, because the exponential generating functions of the numbers $W_{n,i}$ satisfy a linear
differential equation like (\ref{treesize}), and, as explained in Remark \ref{polynomial}, we can explicitly solve those differential equations, since
their "correction term" is a polynomial. Indeed, there are only a finite number of ways that a subtree of a vertex can 
be of rank $k$ and have a subtree of size $i$, once the set of labels going into that subtree is selected. 

As the $w_i$ are positive real numbers, and for all $r$, the inequality $\sum_{i=1}^r w_i \leq 1$ holds, the sum 
\[w=\sum_{i=1}^{\infty} w_i \] exists. 

Now we are ready to state and prove our main theorem. 
\begin{theorem} \label{maintheorem}
For all positive integers $k$, the limit \[a_k:= \lim_{n\rightarrow \infty} a_{n,k} \] exists. Furthermore, 
 \[a_k= w.\]
\end{theorem}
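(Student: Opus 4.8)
The plan is to prove the two inequalities that squeeze $a_{n,k}$ between a lower bound built only from the $W_{n,i}$ and an upper bound that adds the ``tail'' probabilities $U_{n,r}$, and then take limits in the right order. The setup has already been arranged for exactly this: by the containment $\cup_{i=1}^r \mathcal W_{n,i} \subseteq \mathcal R_k \subseteq \left(\cup_{i=1}^r \mathcal W_{n,i}\right) \cup \left(\cup_{i>r} \mathcal V_{n,i}\right)$, dividing through by the total number of vertices $nE_n$ gives, for every fixed $r$ and every $n$,
\begin{equation} \label{sandwich}
\sum_{i=1}^r W_{n,i} \;\le\; a_{n,k} \;\le\; \sum_{i=1}^r W_{n,i} + U_{n,r}.
\end{equation}
This is the whole engine of the proof; everything else is passing to limits carefully.

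First I would fix $r$ and let $n\to\infty$ in \eqref{sandwich}. The left and right sides are finite sums of the $W_{n,i}$ plus the single term $U_{n,r}$, and we already know $\lim_{n\to\infty} W_{n,i}=w_i$ (from the polynomial-correction-term argument of Remark \ref{polynomial}) and $\lim_{n\to\infty} U_{n,r}=u_r$ (from Corollary \ref{ucor}). The subtle point is that $a_{n,k}$ itself is not yet known to converge, so I would take the limit inferior and limit superior of the middle term rather than assuming a limit exists. This yields, for every fixed $r$,
\begin{equation} \label{liminfsup}
\sum_{i=1}^r w_i \;\le\; \liminf_{n\to\infty} a_{n,k} \;\le\; \limsup_{n\to\infty} a_{n,k} \;\le\; \sum_{i=1}^r w_i + u_r.
\end{equation}

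Next I would let $r\to\infty$ in \eqref{liminfsup}. The partial sums $\sum_{i=1}^r w_i$ increase to $w$, and by Corollary \ref{ucor}, specifically equation \eqref{ulimit}, the tail $u_r\to 0$. Hence both the outer bounds converge to $w$, forcing
\[
w \;\le\; \liminf_{n\to\infty} a_{n,k} \;\le\; \limsup_{n\to\infty} a_{n,k} \;\le\; w.
\]
Therefore $\liminf$ and $\limsup$ coincide and equal $w$, which simultaneously establishes that the limit $a_k=\lim_{n\to\infty} a_{n,k}$ exists and that its value is $w$, as claimed.

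I expect the main obstacle to be the bookkeeping at the first limit step, namely justifying that $\liminf$ and $\limsup$ are trapped as in \eqref{liminfsup} before convergence of $a_{n,k}$ is known; this is where one must resist the temptation to write $\lim a_{n,k}$ prematurely and instead carry $\liminf$/$\limsup$ through. The only genuine input that is not elementary is the vanishing of the tail $u_r$, which is precisely the content of Theorem \ref{tight} (via Corollary \ref{ucor}); without $\sum_r v_r=1$ the upper bound would not close, and indeed the paper already flags tree varieties where this fails. Given that result in hand, the remaining work is routine: the containment of vertex sets, division by $nE_n$, and the two successive passages to the limit.
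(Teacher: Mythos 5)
Your proposal is correct and follows essentially the same route as the paper's own proof: the same set containments yielding the two inequalities $\sum_{i=1}^r W_{n,i} \le a_{n,k} \le \sum_{i=1}^r W_{n,i} + U_{n,r}$, the same passage to $\liminf$/$\limsup$ for fixed $r$, and the same final step letting $r\to\infty$ with Theorem \ref{tight} (via Corollary \ref{ucor}) killing the tail term. The only difference is presentational, in that you package both bounds into a single sandwich display, whereas the paper treats the lower and upper bounds in two separate steps.
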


\begin{proof} 
First notice that for all $n$ and $r$, the inequality 
\[\sum_{i=1}^r W_{n,i}\leq a_{n,k} \] holds, since the left-hand side is the probability of a random vertex
having a more restrictive property (rank $k$, subtree size at most $r$) than the property represented on the
right-hand side (rank $k$). 
Therefore, 
\begin{equation} \label{lowerineq} \sum_{i=1}^r w_r\leq \liminf_{n} a_{n,k} , \end{equation}
and so 
\begin{equation} \label{liminf} w \leq \liminf_n a_{n,k} .\end{equation}

Now notice that for all $n$ and $r$, the inequality
 \[a_{n,k} \leq \sum_{i=1}^r W_{n,i}+ \sum_{i>r} V_{n,i} = \left( \sum_{i=1}^r W_{n,i} \right )+ U_{n,r}   \]
holds. Indeed, the right-hand side is the probability of a random vertex being of rank $k$ and having a subtree of size
at most $r$, or simply having a subtree of size more than $r$ (and any rank). A particular way of this occuring is when 
the random vertex is of rank $k$, which is the event whose probability is represented on the left-hand side.

This implies that for all $r$, we have
\begin{equation} \label{upperineq}  \limsup_n a_{n,k} \leq \sum_{i=1}^r w_i + \left(1-\sum_{i=1}^r v_i\right).
\end{equation}
As $r$ goes to infinity, the first sum on the right-hand side goes to $w$, while the second sum goes to 0, as we saw
in Corollary \ref{ucor}.
This proves that  
\begin{equation} \label{limsup} \limsup_n a_{n,k} \leq w.\end{equation}

Comparing inequalities (\ref{liminf}) and (\ref{limsup}), we see that 
\[\limsup_n a_{n,k} \leq w \leq  \liminf_n a_{n,k} ,\] completing the proof of the theorem. 
\end{proof}

For numerical approximations, one can use the following corollary, which is an immediate consequence of Theorem
\ref{maintheorem} that we have just proved, and inequalities (\ref{lowerineq}) and (\ref{upperineq}) that we have
used in the proof of that theorem. 

\begin{corollary} \label{numerical} For all $r$, the chain of inequalities
\[\sum_{i=1}^r w_i  \leq a_k \leq \sum_{i=1}^r w_i + \left ( 1 - \sum_{i=1}^r v_i \right ) 
\]
holds. 
\end{corollary}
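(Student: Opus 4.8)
The plan is to derive Corollary \ref{numerical} as an immediate consequence of the work already done in proving Theorem \ref{maintheorem}, without introducing any new machinery. The key observation is that the two inequalities in the chain are essentially already established as intermediate steps in the preceding proof; the task is to assemble them correctly and to justify that they hold for each \emph{fixed} $r$, not merely in the limit.

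First I would establish the lower bound. From the proof of Theorem \ref{maintheorem} we know that for all $n$ and $r$,
\[\sum_{i=1}^r W_{n,i}\leq a_{n,k}.\]
Taking the limit as $n\to\infty$ on both sides (the left-hand side is a finite sum of convergent sequences, so its limit is $\sum_{i=1}^r w_i$, and the limit of the right-hand side is $a_k$, which exists by Theorem \ref{maintheorem}), we obtain $\sum_{i=1}^r w_i \leq a_k$ for every fixed $r$. This is precisely inequality (\ref{lowerineq}) followed by passing to the limit, and gives the left-hand inequality of the corollary.

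Next I would establish the upper bound in the same spirit. The proof of Theorem \ref{maintheorem} records that for all $n$ and $r$,
\[a_{n,k}\leq \left(\sum_{i=1}^r W_{n,i}\right)+U_{n,r}.\]
Again taking $n\to\infty$ and using that $U_{n,r}\to u_r = 1-\sum_{i=1}^r v_i$ by Corollary \ref{ucor}, together with the existence of $a_k$, we get
\[a_k\leq \sum_{i=1}^r w_i + \left(1-\sum_{i=1}^r v_i\right)\]
for every fixed $r$, which is the right-hand inequality. Combining the two bounds yields the stated chain.

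I do not anticipate a genuine obstacle here, since both inequalities are finite-$r$ versions of relations already proven; the only point requiring a little care is the logical order. In the proof of Theorem \ref{maintheorem}, inequality (\ref{upperineq}) was used with $r\to\infty$ to squeeze $a_{n,k}$, whereas here I must keep $r$ fixed and instead use the already-established existence of the limit $a_k$ to replace $\liminf$ and $\limsup$ by the single value $a_k$. Thus the subtle dependency is that the corollary genuinely relies on Theorem \ref{maintheorem} having been proved first, so that both one-sided bounds collapse onto $a_k$; once that is granted, the corollary is immediate.
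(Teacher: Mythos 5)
Your proposal is correct and matches the paper's own argument: the paper also derives the corollary as an immediate consequence of Theorem \ref{maintheorem} together with inequalities (\ref{lowerineq}) and (\ref{upperineq}), with the existence of $a_k$ collapsing the $\liminf$ and $\limsup$ bounds onto the single value $a_k$. Your observation about the logical order --- that the theorem must come first so that the one-sided bounds apply to $a_k$ itself --- is exactly the role the paper assigns to it.
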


\section{Plane 1-2 trees}
The next tree class we study is the class of {\em plane} 1-2 trees on vertex set $[n]$. These are similar to the trees
of the previous section, except that now the order of the children of  each vertex matters. See Figure \ref{fig:threetrees} 
for an illustration. We denote the number of such trees on $[n]$  by $b_n$. Our goal is to show that Theorem \ref{maintheorem} can be proved for these trees as
well. Most steps are similar to what we saw in Section \ref{nonplane12}, but there will be one step that requires
a separate argument. 

\begin{figure}
 \begin{center}
  \includegraphics[width=70mm]{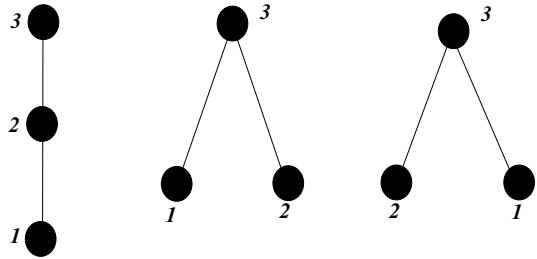}
  \caption{The three rooted plane 1-2 trees on vertex set $[3]$. }
  \label{fig:threetrees}
 \end{center}
\end{figure}

The first few values of the sequence $b_n$, starting with $b_1$, are  1, 1, 3, 9, and 39. This is sequence 
A080635 in OEIS \cite{sloane}.  Setting $b_0=1$,   the exponential generating function
\[B(z)=\sum_{n=0}^\infty b_n\frac{z^n}{n!}\]
satisfies the differential equation
\[B'(z)=1-B(z)+B^2(z).\]
Solving this equation yields
\[B(z)=\frac{1}{2} +  \frac{\sqrt{3}}{2} \tan \left(\frac{\sqrt{3}}{2}z +\frac{\pi }{6}\right).\]
The power series form of $B(z)$ leads to the asymptotic formula
\begin{equation}
\frac{b_n}{n!}\sim\frac{3^{3/2}}{2\pi}\left(\frac{3^{3/2}}{2\pi}\right)^n.\label{bnasymp}
\end{equation}

Just as it was the case for non-plane trees, we can determine the values of $a_0$ and $a_1$ for plane 1-2 trees as
well. 
\subsection{Leaves} \label{leaves-plane}

We can find the ratio of leaves among all vertices in a way that is analogous to that for non-plane 1-2 trees. 
\begin{theorem}The exponential generating function of the numbers of leafs in decreasing plane 1-2 trees is
\[B_0(z)=\sum_{n=0}^\infty b_{0,n}\frac{z^n}{n!}=\frac{6 z+\sqrt{3} \sin \left(\sqrt{3} z\right)+3 \cos \left(\sqrt{3} z\right)-3}{-3 \sqrt{3} \sin \left(\sqrt{3} z\right)+3 \cos \left(\sqrt{3} z\right)+6},\]
which satisfies the differential equation
\[B_0'(z)=2B_0(z)(B(z)-1)+B_0(z)+1.\]
\end{theorem}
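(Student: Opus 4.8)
The plan is to first establish the differential equation by a combinatorial decomposition, exactly parallel to the proof of the formula for $A_0(z)$ in Section \ref{nonplane12}, and then to solve the resulting first-order linear ODE in closed form.

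First I would set up the pointed count: let $(v,T)$ be an ordered pair where $T$ is a plane 1-2 tree on $[n]$ and $v$ is a leaf of $T$, so that $B_0(z)$ is the exponential generating function of such pairs and $B_0(0)=0$. Deleting the root of $T$ (the vertex of largest label) turns the left-hand side into $B_0'(z)$. On the right, I would split according to the number of children of the root. If $n=1$, the single vertex is itself the marked leaf and deletion leaves nothing; this accounts for the constant term $1$. If the root has one child, deletion leaves a single genuine subtree still carrying the marked leaf, contributing $B_0(z)$. If the root has two (ordered) children, deletion leaves an ordered pair of genuine, hence \emph{nonempty}, subtrees, the marked leaf lying in exactly one of them; by the Product formula this contributes $2B_0(z)\bigl(B(z)-1\bigr)$. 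Summing gives
\[B_0'(z)=2B_0(z)\bigl(B(z)-1\bigr)+B_0(z)+1,\qquad B_0(0)=0.\]

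Since this is a first-order linear ODE, $B_0'(z)-\bigl(2B(z)-1\bigr)B_0(z)=1$, I would solve it with an integrating factor. Using the closed form $B(z)=\tfrac12+\tfrac{\sqrt3}{2}\tan(\tfrac{\sqrt3}{2}z+\tfrac{\pi}{6})$ from the start of this section, one computes $\int\bigl(2B(z)-1\bigr)\,dz=-2\ln\cos(\tfrac{\sqrt3}{2}z+\tfrac{\pi}{6})$, so the integrating factor is $\mu(z)=\cos^2(\tfrac{\sqrt3}{2}z+\tfrac{\pi}{6})$. Then $(\mu B_0)'=\mu$, and integrating $\cos^2$ gives $\mu(z)B_0(z)=\tfrac{z}{2}+\tfrac{1}{2\sqrt3}\sin(\sqrt3 z+\tfrac{\pi}{3})+C$, with $C=-\tfrac14$ forced by $B_0(0)=0$.

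Finally I would convert to the stated form using the half-angle identity $\cos^2\theta=\tfrac12(1+\cos2\theta)$ on $\mu$ and the addition formula on $\sin(\sqrt3 z+\tfrac{\pi}{3})$, each of which turns everything into a combination of $\sin\sqrt3 z$, $\cos\sqrt3 z$ and $z$; clearing the common factor $12$ from numerator and denominator yields exactly the claimed expression. Alternatively, once the ODE is in hand, one may simply substitute the claimed $B_0(z)$ together with $B(z)$ and verify both the ODE and $B_0(0)=0$ directly, which is routine. I expect the only genuinely delicate point to be the combinatorial bookkeeping in the two-children case: because the convention $b_0=1$ builds an empty tree into $B(z)$, the sibling of the marked subtree must be counted by $B(z)-1$ rather than $B(z)$, and it is precisely this correction that produces the term $2B_0(z)(B(z)-1)$ and hence the $-B_0(z)$ that distinguishes the equation from a naive count. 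Everything after the ODE is a routine integration and trigonometric simplification.
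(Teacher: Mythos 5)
Your proposal is correct and takes essentially the same approach as the paper: the identical root-removal decomposition (ordered pair of nonempty subtrees giving $2B_0(z)\bigl(B(z)-1\bigr)$, the one-child case giving $B_0(z)$, and the singleton tree giving $1$) yields the same differential equation. The paper leaves the integration implicit, and your integrating-factor computation with $\mu(z)=\cos^2\bigl(\tfrac{\sqrt3}{2}z+\tfrac{\pi}{6}\bigr)$ (which coincides with the factor $Q(z)$ the paper uses later for higher ranks) correctly recovers the stated closed form.
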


\begin{proof} Just like in our proofs for analogous results in Section \ref{nonplane12}, we count ordered pairs
$(v,T)$, where $v$ is a leaf of the tree $T$. Let us remove the root of $T$. Then there are two cases, namely, 
either the removed root was $v$, or it was not. 

If $v$ is not the root, and we got two trees, one with the marked vertex, then the Product formula yields 
the generating function $2B_0(z)(B(z)-1)$, as the order of the components matters.  

If  $v$ is not the root, and the root has only one child, then removing the root, we got only one tree, with a marked vertex, which contributes the generating function $B_0(z)$.

Finally, if  $v$ is the root, the only possible tree is the one-point graph. The removal of that root leads to the empty graph, represented by $1=\left(\frac{z^1}{1!}\right)'$ in the differential equation.
\end{proof}

By the generating function we can determine the first values of $b_{0,n}$:
\begin{center}
\begin{tabular}{c|c|c|c|c|c|c|c|c|c|c|c}
$n$&0&1&2&3&4&5&6&7&8&9&10\\
$b_{0,n}$&0&1&1&5&17&93&513&3\,477&25\,569&212\,733&1\,929\,393
\end{tabular}
\end{center}

A simple application of Lemma \ref{doubleres} yields that
\[\frac{b_{0,n}}{(n+1)n!}\sim\frac{\frac{4\pi}{9\sqrt3}-\frac13}{\left(\frac{2\pi}{3\sqrt3}\right)^{n+2}},\]
and, by recalling \eqref{bnasymp},
\[\lim_{n\to\infty}\frac{b_{0,n}}{(n+1)b_n}=\frac23-\frac{\sqrt3}{2\pi}\approx0.391.\]

\subsection{Neighbors of leaves} \label{neighbors-plane}
This case is similar to that of leaves, with some subtle differences. Let $b_{1,n}$ denote the number of all vertices of rank
1 in all trees on vertex set $[n]$, and let $B_{1}(z)$ be the exponential generating function of these numbers.  Let us count ordered pairs $(v,T)$, where $v$ is a vertex of rank 1 in a tree $T$. Let us remove the root of $T$. The case when $v$ is not the root, is the same as in Section \ref{leaves-plane}, contributing  the term $2B_1(z)(B(z)-1)+B_1(z)$. When $v$ is the root, then removing it we obtain a leaf and a tree. If this tree is not empty, we must distinguish whether it was on the left or right hand side, so we must add a term $2z(B(z)-1)$. If, in turn, the subtree is empty, we must add the term representing the path of length one, that is $\left(\frac{z^2}{2!}\right)'=z$. Finally, we must realize that the terms $2B_1(z)(B(z)-1)$ and $2z(A(z)-1)$ both contain the two trees on three points where the root has two children. Therefore we must subtract $2\left(\frac{z^3}{3!}\right)'=z^2$.  This proves the following. 

\begin{theorem}
The generating function $B_1(z)$ satisfies the differential equation
\[B_1'(z)=2B_1(z)(B(z)-1)+B_1(z)+2z(B(z)-1)+z-z^2\quad(B_1(0)=0.\]
Therefore, 
\[B_1(z)=\frac{6 z^3+\sqrt{3} \left(3 z^2-15 z-5\right) \sin \left(\sqrt{3} z\right)+3 \left(3 z^2+5 z-5\right) \cos \left(\sqrt{3} z\right)+15}{9 \left(\sqrt{3} \sin \left(\sqrt{3} z\right)-\cos \left(\sqrt{3} z\right)-2\right)}.\]
\end{theorem}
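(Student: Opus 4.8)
The plan is to establish the two assertions of the theorem in sequence: first derive the differential equation for $B_1(z)$ directly from the combinatorial decomposition that immediately precedes the statement, and then solve that linear ODE explicitly to obtain the closed form. The combinatorial argument in the paragraph before the theorem already bookkeeps all the cases, so my first step is simply to transcribe that case analysis into the equation $B_1'(z)=2B_1(z)(B(z)-1)+B_1(z)+2z(B(z)-1)+z-z^2$, with the initial condition $B_1(0)=0$ coming from the fact that there are no rank-$1$ vertices in trees on $0$ or $1$ vertices. The only care needed here is the inclusion--exclusion correction $-z^2$, which removes the double-counting of the two plane trees on three vertices whose root has two leaf-children; these are counted both in the ``$v$ not the root'' term $2B_1(z)(B(z)-1)$ and in the ``$v$ is the root'' term $2z(B(z)-1)$.

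Next I would solve the ODE. Rewriting it as $B_1'(z)-\bigl(2B(z)-1\bigr)B_1(z)=2z(B(z)-1)+z-z^2$, this is a first-order linear equation whose integrating factor is $\mu(z)=\exp\!\left(-\int(2B(z)-1)\,dz\right)$. Since $B(z)=\tfrac12+\tfrac{\sqrt3}{2}\tan\!\bigl(\tfrac{\sqrt3}{2}z+\tfrac{\pi}{6}\bigr)$, we have $2B(z)-1=\sqrt3\tan\!\bigl(\tfrac{\sqrt3}{2}z+\tfrac{\pi}{6}\bigr)$, whose antiderivative is $-2\log\cos\!\bigl(\tfrac{\sqrt3}{2}z+\tfrac{\pi}{6}\bigr)$, so the integrating factor is a clean power of a cosine. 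The homogeneous solution is therefore proportional to $\sec^2\!\bigl(\tfrac{\sqrt3}{2}z+\tfrac{\pi}{6}\bigr)$, matching the double pole we expect at the dominant singularity. Multiplying through by $\mu$ and integrating the right-hand side then recovers $B_1(z)$; because the forcing term $2z(B(z)-1)+z-z^2$ is a polynomial times $B$ plus a polynomial, the required integrals are of the forms $\int z^j\,dz$ and $\int z^j\sin(\sqrt3 z)\,dz$, $\int z^j\cos(\sqrt3 z)\,dz$, all elementary and handled by the same integration-by-parts formula invoked in the proof of Proposition \ref{fixedr}. Fixing the constant of integration by $B_1(0)=0$ and simplifying yields the stated closed form.

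The main obstacle is not conceptual but computational: carrying out the integration and then algebraically collapsing the result into the compact single-fraction form displayed in the theorem. The numerator $6z^3+\sqrt3(3z^2-15z-5)\sin(\sqrt3 z)+3(3z^2+5z-5)\cos(\sqrt3 z)+15$ and denominator $9(\sqrt3\sin(\sqrt3 z)-\cos(\sqrt3 z)-2)$ are exactly the sort of expression where sign errors and misplaced factors of $\sqrt3$ accumulate, so the honest verification is to differentiate the claimed $B_1(z)$ and confirm that it satisfies the ODE together with $B_1(0)=0$. That check is purely mechanical and, since a first-order linear ODE with a given initial value has a unique solution, it is also sufficient. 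I would therefore present the derivation via the integrating factor to explain where the formula comes from, but anchor the correctness on the substitution-and-verify step, which sidesteps the risk inherent in the forward simplification.
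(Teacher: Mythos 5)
Your proposal is correct and takes essentially the same route as the paper: the paper's proof of this theorem is precisely the root-removal case analysis you transcribe (down to the same, slightly imprecise, attribution of the $-z^2$ correction, which in fact repairs a double count occurring inside the term $2z(B(z)-1)$ alone --- when both children of the root are leaves, either child can play the role of ``the leaf'' --- since $2B_1(z)(B(z)-1)$ begins at order $z^3$ and a three-vertex tree has no non-root rank-1 vertex), followed by solving the resulting first-order linear ODE. Your integrating factor $\cos^2\bigl(\tfrac{\sqrt3}{2}z+\tfrac{\pi}{6}\bigr)$ is exactly the function $Q(z)$ the paper introduces for the analogous equations in the next subsection, and your differentiate-and-verify fallback, justified by uniqueness of solutions of a first-order linear initial value problem, is a sound way to certify the closed form, whose expansion indeed reproduces $b_{1,2}=1$, $b_{1,3}=3$, $b_{1,4}=15$.
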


The first values of $b_{1,n}$ are as follows.

\begin{center}
\begin{tabular}{c|c|c|c|c|c|c|c|c|c|c|c}
$n$&0&1&2&3&4&5&6&7&8&9&10\\
$b_{1,n}$&0&0&1&3&15&75&435&2883&21\,447&177\,435&1\,613\,835
\end{tabular}
\end{center}

The asymptotic expression for the total number of rank one vertices can be found easily:
\[\frac{b_{1,n}}{(n+1)n!}\sim\frac{540 \sqrt{3} \pi -16 \sqrt{3} \pi ^3-1215}{2187\left(\frac{2\pi}{3\sqrt3}\right)^{n+2}},\]
and
\[\lim_{n\to\infty}\frac{b_{1,n}}{(n+1)b_n}=\frac{10}{9}-\frac{5}{2\sqrt3\pi}-\frac{8\pi^2}{243}\approx 0.3267.\]

\subsection{Vertices of higher rank} If we try to apply the method of Sections \ref{leaves-plane} and \ref{neighbors-plane} for vertices of rank $k$, for $k\geq 2$, we 
fail, because yet again, the relevant generating functions will not have elementary antiderivatives. However, the 
method that we used in Section \ref{higher-nonplane} to prove that the limits $a_k$ exist will work again, as we will show. 

Let us define the limits $v_r$ and $w_r$ exactly as we did  in Section \ref{higher-nonplane}, except that now the trees are
plane. If we try to follow the argument of the non-plane case, we see that the first step towards proving the existence
of $v_r$ and $w_r$  is to show that we can explicitly solve the
linear differential equation
\[f'(z)=2f(z)(B(z)-1) +f(z) +P(z), \]
where $P(z)$ is a {\em polynomial} function. Indeed, we get differential equations of the above kind when we 
attempt to find the probabilities $V(n,r)$ or $W(n,r)$. 

Bringing the above differential equation to standard form, we get
\begin{equation} \label{standard}  f'(z)+(1-2B(z))f(z) = P(z).\end{equation}

In order to solve  (\ref{standard}), we multiply both sides by the integrating factor
\[Q(z)=\exp \left (\int (1-2B(z)) \ dz \right)=\frac{1}{2}+\frac{\cos \left (\sqrt{3}z \right )}{4} -\frac{\sqrt{3}\sin \left (\sqrt{3}z \right )}{4}.\]

Multiplying both sides of (\ref{standard}) by $Q(z)$, we get the equation 
\[ (f(z)Q(z))' = Q(z)P(z),\]
which we can explicitly solve as long as we can integrate $Q(z)P(z)$. In the present case, we can certainly do that, since 
$P(z)$ is a polynomial function of $z$, hence it is a polynomial function of $\sqrt{3}z$ as well, so a substitution $t=\sqrt{3}z$ will result in a function consiting of the sums of summands in the form $t^m\sin t$ and $t^n \cos t$. 
In the end, we obtain 
\begin{equation} \label{generalf} f(z)=\frac{\int Q(z)P(z)}{Q(z)}=\frac{K(z)}{Q(z)}, \end{equation}  a meromorphic function. The asymptotic behavior of meromorphic
functions is well understood. See Theorem IV.10 of \cite{FL} for the most important results. In our case, 
the numerator $K(z)$ of $f(z)$ in (\ref{generalf}) is an entire function, while the denominator has a zero
at $z=2\sqrt{3}\pi /9$ that has multiplicity two. Therefore, the asymptotics of the coefficients of $f(z)$ can be computed
using Lemma \ref{doubleres}. Therefore, the existence of $v_r$ and $w_r$ can also be proved in the same way as it was
in Section \ref{nonplane12} for non-plane 1-2 trees. 

The next step is to prove Theorem \ref{tight}, that is, the equality $\sum_{r=1}^{\infty}v_rs=1$ for plane 1-2 trees.
There is one step in that proof that needs an argument that is different from its non-plane analogue, which is
Proposition \ref{every}.  Therefore, 
we announce and prove it separately as follows. 

\begin{proposition} \label{pevery} For all $n$, the expected number of leaves in a random plane 1-2 tree on vertex set $[n]$ is at least $1/4$.
\end{proposition}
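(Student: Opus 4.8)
The plan is to mirror the structure of the proof of Proposition \ref{every}, the non-plane analogue, while accounting for the fact that plane 1-2 trees are ordered. As in that proof, I would first reduce the statement about leaves to a statement about the number of vertices with exactly one child. In any 1-2 tree (plane or not), each vertex has $0$, $1$, or $2$ children; if $\ell$ is the number of leaves, $s$ the number of vertices with one child, and $d$ the number of vertices with two children, then counting edges gives $\ell + s + d = n$ and $s + 2d = n-1$, so $\ell = d+1$. Hence the expected number of leaves equals one plus the expected number of degree-two vertices, and it suffices to show that the expected number of one-child vertices is at most $n/2$ (equivalently, that $\E(s) \le n-1 - \E(s)$, i.e. $\E(s)\le (n-1)/2$, which already forces $\E(\ell)\ge 1$; but to match the clean bound I would carry the induction as below).

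The main tool is induction on $n$, conditioning on the degree of the root, exactly as in Proposition \ref{every}. Let $M_n$ denote the expected number of one-child vertices in a random plane 1-2 tree on $[n]$, and let $p_n$ be the probability that the root has exactly one child. Removing the root leaves either a single subtree (when the root has one child) or an ordered pair of subtrees (when the root has two children), each a uniformly random plane 1-2 tree on its label set. Conditioning this way and using the induction hypothesis on the smaller subtrees gives an estimate of the shape
\[
M_n \le \frac{n-1}{2} + p_n,
\]
and the bound $M_n \le n/2$ follows once we establish that $p_n \le 1/2$ for $n\ge 3$. So the crux is to control the root-degree probability $p_n = b_{n-1}/b_n$, the ratio of consecutive terms of the sequence counting plane 1-2 trees.

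The hard part, and the place where the argument genuinely departs from the non-plane case, is precisely this control of $p_n = b_{n-1}/b_n$. In the non-plane setting the authors invoked log-convexity of the Euler numbers (from \cite{Liu}) to conclude that $p_n$ is decreasing and hence bounded by $p_3 = 1/2$. For plane 1-2 trees, the relevant sequence is $b_n$ (OEIS A080635), and I would need the analogous monotonicity, namely that $b_{n-1}/b_n$ is decreasing, which is equivalent to log-convexity of $(b_n)$. I expect this to be the main obstacle: one must show $b_n^2 \le b_{n-1} b_{n+1}$ for all $n$. I would attempt to extract a recurrence for $b_n$ from the differential equation $B'(z) = 1 - B(z) + B^2(z)$ — differentiating the generating function identity yields a convolution recurrence of the form $b_{n+1} = -b_n + \sum_{j} \binom{n}{j} b_j b_{n-j}$ with a correction for small $n$ — and then prove log-convexity either by induction using this recurrence or by citing a known log-convexity result for this sequence. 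Once $p_n \le 1/2$ for $n\ge 3$ is in hand, the induction closes and the expected number of leaves is at least $1/4$ (indeed the same work gives $\E(\ell) = n - 1 - M_n + \ldots \ge 1$), completing the proof.
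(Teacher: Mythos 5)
Your strategy --- mirroring the induction of Proposition \ref{every}, reducing the claim to the bound $M_n \le n/2$ on one-child vertices and hence to $p_n = b_{n-1}/b_n \le 1/2$ for $n \ge 3$ --- is a legitimate alternative route, and it genuinely departs from the paper, which avoids induction on plane trees entirely: the paper groups the $2^{(n-1-s_i)/2}$ plane trees lying over each non-plane tree, applies the Chebyshev sum (rearrangement) inequality to conclude that the plane average of one-child vertices is at most the non-plane average, and then quotes Proposition \ref{every}. However, your proposal has a genuine gap at precisely the step you yourself flag as ``the main obstacle'': you never prove $p_n \le 1/2$, nor the log-convexity of $(b_n)$ that you offer as a sufficient condition. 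In the non-plane case this step was discharged by citing the known log-convexity of the Euler numbers \cite{Liu}; for the sequence $b_n$ (A080635) you give no citation and no proof, only a plan (``extract a recurrence \ldots\ and then prove log-convexity by induction or by citing a known result''). Since every other step of your argument is routine, the entire content of the proposition rests on this unproven inequality, so the proof is incomplete as it stands. This is not a pedantic complaint: the need for a new idea at exactly this point is why the paper's authors abandoned the direct induction and introduced the comparison argument.

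The gap is fillable, and more cheaply than via log-convexity. In a plane 1-2 tree on $[n]$, vertex $1$ is necessarily a leaf (each child's label is smaller than its parent's), and deleting it gives a bijection between trees on $[n]$ and pairs consisting of a tree $T'$ on $n-1$ labels together with an insertion slot for vertex $1$: a leaf of $T'$ offers one slot, while a one-child vertex offers two (left or right of its existing child). If $T'$ has $s$ one-child vertices, it has $(n-s)/2$ leaves, so the number of slots is
\[
\frac{n-s}{2} + 2s = \frac{n+3s}{2} \ \ge\ \frac{n}{2},
\]
giving $b_n \ge \frac{n}{2}\, b_{n-1}$, hence $p_n \le 2/n \le 1/2$ for $n \ge 4$, while $p_3 = b_2/b_3 = 1/3$ directly. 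With such a lemma inserted, your induction closes and yields a correct proof independent of the paper's; without it, the argument does not stand.
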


\begin{proof} We prove that the expected number of leaves in plane 1-2 trees on vertex set $[n]$ is at least as large as the expected
number  of leaves in non-plane 1-2 trees on vertex set $[n]$. As the latter has been proved to be at least
$n/4$ in Proposition \ref{every}, this will be sufficient. 

Clearly, in both tree varieties, the number of vertices with two children is one less than the number of leaves. Therefore,
it suffices to prove that average the number of vertices with one child is {\em at most as large} in plane 1-2 trees on $[n]$ as it is on non-plane 1-2 trees on $[n]$. We use a well-known inequality, known as the {\em Chebyshev sum inequality}
or (a special case of) the {\em rearrangement inequality}. 

\begin{proposition} \label{cheb}  Let $r_1\leq r_2\leq \cdots \leq r_u$ and $t_1\geq t_2\geq \cdots \geq t_u$ be nonnegative real numbers. Then the inequality 
\[\frac{r_1+r_2+\cdots +r_u}{u} \geq \frac{r_1t_1+r_2t_2+\cdots +r_ut_u}{t_1+t_2+\cdots +t_u} \]
holds.
\end{proposition}
See \cite{hardy} or \cite{holstermann} for a proof. 

Let us return to the proof of Proposition \ref{pevery}. Consider all $E_n$ non-plane trees on vertex set $[n]$. Let $s_1,s_2,\cdots ,s_{E_n}$ denote the number
of vertices with exactly one child in each of these trees, and let us order the set of these $E_n$ trees so that the sequence of the $s_i$ is non-decreasing, that is,  $s_1\leq s_2\leq \cdots \leq s_{E_n}$. Then the average number of vertices with one child in all
non-plane 1-2 trees on $[n]$ is
\begin{equation} \label{firstav}  M_n = \frac{\sum_{i=1}^{E_n} s_i}{E_n} .\end{equation}
On the other hand, if such a tree $T$ has $s_i$ vertices with one child, then it has $(n-1-s_i)/2$ vertices with two children,
(and, though we will need this only later, $T$ has $(n+1-s_i)/2$ leaves). 
Therefore, there are exactly $2^{(n-1-s_i)/2}$ {\em plane} 1-2 trees on vertex set $[n]$ that are identical to $T$ as
non-plane trees, and each of those trees has $s_i$ vertices with one child. This proves that the average number of 
vertices with exactly one child in all {\em plane} 1-2 trees on vertex set $[n]$ is 
\begin{equation} \label{secondav} m_n =  
 \frac{\sum_{i=1}^{E_n} s_i 2^{(n-1-s_i)/2}}{\sum_{i=1}^{E_n} 2^{(n-1-s_i)/2}} .\end{equation}

Finally, note that the sequences $s_1\leq s_2\leq \cdot \leq s_{E_n}$ and $2^{(n-1-s_1)/2}\geq 2^{(n-1-s_2)/2}\geq 
\cdots \geq 2^{(n-1-s_{E_n})/2}$ satisfy the requirements of Proposition \ref{cheb}, so $M_n \geq m_n$ holds. 
So the average non-plane 1-2 tree has at least as many vertices with one child as the average plane 1-2 tree of the same size. Therefore, the average {\em plane} 1-2 tree has at least as many leaves as the average non-plane 1-2 tree of the same size. The proof of our claim is now immediate, since we saw in Proposition \ref{every} that the average non-plane
tree on $[n]$ has at least $n/4$ leaves. 
\end{proof}

All remaining steps of Theorem \ref{maintheorem} can be carried out without any extra effort, showing that the 
limits $a_k$ exist for all $k$, for the variety of plane 1-2 trees as well. 

\section{Approximations}
Corollary \ref{numerical} makes numerical approximations of $a_k$ possible. As the upper bound provided by (\ref{upperineq}) was obtained by a rather crude estimate, it is reasonable to assume that the lower bound 
in that corollary is a better estimate for $w$ than the upper bound.  It follows from our methods that both the upper and the lower bounds will be
of the form $\pi^{-2}F(\pi)$, where $F$ is a polynomial function with rational coefficients.  For instance, selecting $k=2$
and $r=12$ leads to a lower bound of $ 0.188285\leq a_2$.  On the other hand, less rigorous, but more extensive 
computations carried out by Jay Pantone \cite{pantone} suggest the  approximate values
$a_2 \approx  0.20278137$, 
   $a_3 \approx 0.0893474$, and 
 $ a_4 \approx 0.0243854$.

\begin{center} {\bf Acknowledgement} 
\end{center}

The authors are indebted to Svante Janson for the proof of Theorem \ref{tight}. They are also grateful to Jay Pantone
who helped them obtaining the numerical results of the last section.


\begin{thebibliography}{99}

\bibitem{WalkThroughComb}{\sc  B\'ona, M.} (2011) A Walk Through Combinatorics, third edition, World Scientific.

\bibitem{protected}{\sc  B\'ona, M.} (2014) $k$-protected vertices in binary search trees, \emph{Adv. in Appl. Math}. {\bf 53,}  1--11.

\bibitem{bona-pittel} {\sc  B\'ona, M. and Pittel, B.} (2017) On a random search tree: asymptotic  enumeration of vertices by distance from leaves, \emph{Adv. in Appl. Probab}. {\bf 49} (2017), no. 3, 850--876. 

\bibitem{janson} {\sc Devroye, L. and Janson, S.} (2014).  Protected nodes and fringe subtrees in some random trees, \emph{Electron. Commun. Probab}. {\bf 19,}  no. 6, 10 pages.


\bibitem{FL} {\sc Flajolet, P. and Sedgewick, R.} (2009) Analytic Combinatorics, Cambridge University Press.

\bibitem{hardy} {\sc Hardy, G. H.; Littlewood, J. E.; P\'olya, G.} (1988). Inequalities. Cambridge Mathematical Library.  {\emph Cambridge University Press}. 

\bibitem{holstermann} {\sc  Holstermann, J.} (2017),  A Generalization of the Rearrangement Inequality. 
\emph{Mathematical Reflections} {\bf 5}, 4 pages. 

\bibitem{holmgren} {\sc Holmgren, C.  and Janson, S.} (2017).   Fringe trees, Crump-Mode-Jagers branching processes
and $m$-ary search trees. \emph{Probability Surveys} {\bf 14} 53--154.


\bibitem{jansonpers} {\sc Janson, S.} Personal communication, February,  2018. 


\bibitem{Liu} {\sc Liu, L. L.and  Wang, Y.}  On the log-convexity of combinatorial sequences. {\em Adv. in Appl. Math.}
{\bf  39} (2007), no. 4, 453--476.

\bibitem{pantone} {\sc Pantone, J.} Personal communication, January, 2018.

\bibitem{sloane} {\sc Online Encyclopedia of Integer Sequences}, online database, \url{www.oeis.org}.
\end{thebibliography}
\end{document}